\newtheorem{theorem}{Theorem}[section]
\newtheorem{proposition}[theorem]{Proposition}
\newtheorem{lemma}[theorem]{Lemma}
\newtheorem{definition}[theorem]{Definition}
\DeclareMathOperator{\PGL}{PGL}
\DeclareMathOperator{\rk}{rk}
\DeclareMathOperator{\Sym}{Sym}
\DeclareMathOperator{\codim}{codim}
\author{Ray Shang}
\title{Slope Semistability of Veronese normal bundles}
\begin{document}

\begin{abstract}
A classical fact is that normal bundles of rational normal curves are well-balanced. We generalize this by proving that all Veronese normal bundles are slope semistable. We also determine the line bundle decomposition of the restriction of degree 2 Veronese normal bundles to lines and rational normal curves. 
\end{abstract}
 
\maketitle

\tableofcontents

\section{Introduction}

The normal bundle of a smooth projective variety plays a crucial role in many problems of geometry, arithmetic, and commutative algebra. 
For example, cohomological information about the normal bundle can determine whether a variety satisfies interpolation \cite[Theorem A.7]{landesmanDelPezzo} \cite{larsonVogt}. The Giesker (semi)stability and slope (semi)stability of normal bundles has received attention since the early 1980s, but has been mostly restricted to normal bundles of curves, especially those embedded in $\mathbb{P}^3$. This includes the work of Coskun-Larson-Vogt \cite{coskunlarsonvogt}, Ein-Lazarsfeld \cite{einlazarsfeld}, Eisenbud-Van de Ven \cite{eisenbudven}, Atanasov-Larson-Yang \cite{atansovLarsonYang}, Ballico-Ellia \cite{ballicoEllia}, Ellingsrud-Laksov \cite{ellingsrudlaksov}, Ghione-Sacchiero \cite{ghioneSacchiero}, Ran \cite{ran}, and Newstead \cite{newstead}. Notably, Coskun-Larson-Vogt \cite{coskunlarsonvogt} recently showed that, with few exceptions, Brill-Noether general curves in $\mathbb{P}^3$ have stable normal bundle, and Ein-Lazarsfeld \cite{einlazarsfeld} showed that an elliptic curve of degree $n+1$ in $\mathbb{P}^n$ has semistable normal bundle. To the author's knowledge, fewer work has been done to investigate Gieseker (semi)stability and slope (semi)stability for normal bundles of higher dimensional varieties (the two notions coincide for curves). Although, one example is the work of Kleppe-Miro-Roig \cite[Theorem 5.3, Theorem 5.7]{KleppeMiróRoig}, which shows that when $X \subset \mathbb{P}^n$ is a smooth linear determinantal scheme with certain constraints on its codimension, its normal bundle is slope (semi)stable. 

\subsection{Main Results}

In this paper, we expand the literature on varieties with slope semistable normal bundles towards higher dimensional varieties. Let $k$ be an algebraically closed field of characteristic 0 and let $V$ be a $k$-vector space of dimension $n+1$. Recall that a degree $d$ Veronese variety of dimension $n$ is the image of an embedding 
\[
v_{n,d}: \mathbb{P}(V) \hookrightarrow \mathbb{P}(\Sym^d V)
\]
given by the complete linear series $|\mathcal{O}_{\mathbb{P}(V)}(d)|$. A rational normal curve $R$ of degree $d$ is a degree $d$ Veronese variety of dimension 1, and a classically known fact is that its normal bundle $\mathcal{N}_{R/\mathbb{P}^d}$ is isomorphic to 
\[
\mathcal{O}_{\mathbb{P}^1}(d+2)^{\oplus (d-1)}. 
\]
Thus, the normal bundle of a rational normal curve is well-balanced. We generalize this classically known fact to the following: 

\begin{theorem}\label{mainResult1}
    All Veronese normal bundles are slope semistable. 
\end{theorem}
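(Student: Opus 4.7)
The plan is to exploit the $\mathrm{SL}(V)$-equivariance of $v_{n,d}$ to reduce the semistability question to a representation-theoretic calculation. Since $v_{n,d}$ is $\mathrm{SL}(V)$-equivariant, so is $\mathcal{N}$; by uniqueness of the Harder--Narasimhan filtration, any maximal destabilizing subsheaf is $\mathrm{SL}(V)$-invariant. Writing $\mathbb{P}(V) \cong \mathrm{SL}(V)/P$ for the parabolic $P$ stabilizing a line $\langle v\rangle$, equivariant saturated subsheaves of a homogeneous bundle correspond bijectively to $P$-submodules of the fiber at $[v]$. It therefore suffices to show that every nonzero $P$-submodule of $\mathcal{N}|_{[v]}$ gives an equivariant subbundle of slope at most $\mu(\mathcal{N})$.

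First I would identify $\mathcal{N}|_{[v]}$ as a $P$-module. Combining the normal bundle sequence with the pullback of the Euler sequence on $\mathbb{P}(\Sym^d V)$ and twisting by $\langle v\rangle^{-d}$ gives $\mathcal{N}|_{[v]} \cong \Sym^d V/(v^{d-1}\cdot V)$. The $P$-filtration of $\Sym^d V$ by the subspaces $v^k\cdot\Sym^{d-k}V$ descends to a filtration with graded pieces $E_j := \Sym^j W \otimes \langle v\rangle^{-j}$ for $j = 2, \ldots, d$, where $W := V/\langle v\rangle$. Each $E_j$ coincides with $\Sym^j T_{\mathbb{P}(V)}|_{[v]}$ and is irreducible as a module for the Levi $L := \GL(W) \times \Gm$ of $P$, and the various $E_j$'s have pairwise distinct $L$-highest weights.

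Next I would classify the $P$-submodules and bound their slopes. The unipotent radical $\mathfrak{u} \cong W^* \otimes \langle v\rangle$ acts by contraction, giving an $L$-equivariant map $\mathfrak{u} \otimes E_j \to E_{j-1}$ that is surjective for $j \geq 3$ and vanishes for $j = 2$. Consequently every nonzero $P$-submodule is an $L$-direct summand $\bigoplus_{j=2}^s E_j$ for some $s \in \{2, \ldots, d\}$, and the corresponding equivariant subbundle $\mathcal{F}_s \subset \mathcal{N}$ shares Chern classes with its associated graded $\bigoplus_{j=2}^s \Sym^j T_{\mathbb{P}(V)}$. Using the identity $\binom{n+j-1}{j-1}/\binom{n+j-1}{j} = j/n$, the inequality $\mu(\mathcal{F}_s) \leq \mu(\mathcal{N})$ becomes the monotonicity of the weighted average $\sum_{j=2}^s j\binom{n+j-1}{j}/\sum_{j=2}^s \binom{n+j-1}{j}$ in $s$, which holds because it is the weighted average of an increasing sequence over an expanding index set with positive weights.

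The hard part is correctly determining the $P$-module structure of the fiber: verifying that the unipotent-radical action is nonvanishing for $j \geq 3$ and that no exotic $P$-submodules exist beyond the downward-closed $\bigoplus_{j=2}^s E_j$. Once this classification is in hand, the slope comparison reduces to additivity of Chern classes in short exact sequences together with a straightforward monotonicity argument.
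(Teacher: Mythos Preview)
Your proposal is correct. It shares the paper's underlying idea---exploit equivariance and uniqueness of the Harder--Narasimhan filtration to reduce to finitely many candidate destabilizers, then compare slopes---but the execution is packaged differently. The paper dualizes and twists to identify $(\mathcal{N}_{X/\mathbb{P}(\Sym^d V)}\otimes\mathcal{O}(-d))^*$ with the sheaf $K^{d-1}_d\subset\Sym^dV\otimes\mathcal{O}_{\mathbb{P}(V)}$ of Huybrechts--Lehn \S1.4, and then cites their Lemma~1.4.5 that each $K^i_d$ is Gieseker semistable; the technical content is the verification (via explicit symmetrizations and dualizations of Euler sequences, Proposition~\ref{dualCompositions} and Lemma~\ref{dualSymmetrizingCommute}) that the dual of $\Theta$ coincides with $\delta^{d-1}_d$. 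You instead work directly with $\mathcal{N}$ via the dictionary between $\mathrm{SL}(V)$-equivariant subbundles on $\mathrm{SL}(V)/P$ and $P$-submodules of the fiber, classify the latter by hand, and finish with an elementary weighted-average monotonicity. Unwound, the two arguments match up: Huybrechts--Lehn's $\PGL(V)$-invariant chain $K^1_d\subset\cdots\subset K^{d-1}_d$ dualizes and twists precisely to your chain $\mathcal{F}_2\subset\cdots\subset\mathcal{F}_d$, and their slope inequality $\mu(K^i_d)<\mu(K^{i+1}_d)$ becomes your monotonicity of $\mu(\mathcal{F}_s)$. Your route is conceptually more streamlined (no dual/twist detour, no explicit map-matching) but presupposes the homogeneous-bundle formalism and the check that the unipotent action $\mathfrak{u}\otimes E_j\to E_{j-1}$ is surjective for $j\ge 3$; the paper's route is more elementary in its ingredients but outsources the equivariance step to a cited reference.
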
 

Since Theorem \ref{mainResult1} holds, the Grauert-Mulich theorem imposes restrictions on the line bundle decomposition of the restriction of Veronese normal bundles to a general line \cite[Theorem 3.0.1]{HuybrechtsLehn}. In the case of degree 2 Veroneses, we determine this line bundle decomposition exactly. 

\begin{theorem}\label{restrictionTheoremLine}
Let $k$ be an algebraically closed field of characteristic 0. Let $v_{n,2}: \mathbb{P}(V) \to \mathbb{P}(\Sym^2 V)$ be a degree $2$ Veronese embedding, where $V$ is a $k$-vector space of dimension $n+1$ and $n \geq 2$. Let $X$ denote the Veronese variety and $L \subset \mathbb{P}(V)$ a line. Then
\[
\mathcal{N}_{X/\mathbb{P}(\Sym^2 V)}|_L \cong \mathcal{O}_{\mathbb{P}^1}(2)^{\oplus [\frac{n(n-1)}{2}] } \oplus \mathcal{O}_{\mathbb{P}^1}(3)^{\oplus (n-1)} \oplus \mathcal{O}_{\mathbb{P}^1}(4).
\]
\end{theorem}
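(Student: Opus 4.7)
The plan is to identify $\mathcal{N}_{X/\mathbb{P}(\Sym^2 V)}|_L$ as the cokernel of an explicit map of split bundles on $L \cong \mathbb{P}^1$. Starting from the Euler sequences on $\mathbb{P}(V)$ and on $\mathbb{P}(\Sym^2 V)$ (pulled back along $v_{n,2}$), the differential $dv_{n,2}$ fits into a $3 \times 3$ diagram whose rows are the short exact Euler sequences and whose middle column is a lift $\phi \colon V \otimes \mathcal{O}(1) \to \Sym^2 V \otimes \mathcal{O}(2)$ of $dv_{n,2}$; the leftmost column turns out to be multiplication by $2$ on $\mathcal{O}$, an isomorphism in characteristic $0$, so by the snake lemma $\mathcal{N}_{X/\mathbb{P}(\Sym^2 V)} \cong \operatorname{coker}(\phi)$, and this identification remains valid after restriction to $L$. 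In the basis $e_0, \ldots, e_n$ of $V$ and $\{e_i e_j\}_{i \leq j}$ of $\Sym^2 V$, the matrix entry of $\phi$ at position $(k, (i,j))$ equals $x_j$ if $k = i < j$, equals $x_i$ if $k = j > i$, equals $2 x_k$ if $i = j = k$, and is zero otherwise.

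Next, I would choose coordinates so that $L = \{x_2 = \cdots = x_n = 0\}$ and set $W = \langle e_0, e_1 \rangle$, $W' = \langle e_2, \ldots, e_n \rangle$, so that $V = W \oplus W'$ induces the decomposition $\Sym^2 V = \Sym^2 W \oplus (W \otimes W') \oplus \Sym^2 W'$. Restriction to $L$ kills every matrix entry of $\phi$ which is a linear form involving $x_k$ for some $k \geq 2$. A direct inspection of the matrix then shows that $\phi|_L$ is block-diagonal with only two nonzero blocks, namely $(A)\colon W \otimes \mathcal{O}_L(1) \to \Sym^2 W \otimes \mathcal{O}_L(2)$ and $(B)\colon W' \otimes \mathcal{O}_L(1) \to (W \otimes W') \otimes \mathcal{O}_L(2)$, while the summand $\Sym^2 W' \otimes \mathcal{O}_L(2) = \mathcal{O}_L(2)^{n(n-1)/2}$ is untouched by the image. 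The mixed blocks vanish after restriction precisely because any surviving matrix entry would have to be a linear form in $x_2, \ldots, x_n$.

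The cokernels of the two nonzero blocks are each recognized as familiar Euler-sequence maps. Block $(A)$ is precisely the middle map of the Euler diagram for the degree $2$ Veronese $L \hookrightarrow \mathbb{P}(\Sym^2 W) \cong \mathbb{P}^2$, so its cokernel is the normal bundle of the image conic in $\mathbb{P}^2$, namely $\mathcal{O}_L(4)$. Block $(B)$ further splits as a direct sum of $n - 1$ copies of the map $\mathcal{O}_L(1) \to \mathcal{O}_L(2)^{2}$ with matrix $(x_0, x_1)^T$, which is the Euler sequence for $L \cong \mathbb{P}^1$ tensored with $\mathcal{O}_L(1)$, and whose cokernel is $T_L \otimes \mathcal{O}_L(1) = \mathcal{O}_L(3)$. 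Combining the three contributions yields the desired isomorphism. The main obstacle is the explicit description of $\phi$ and the verification of its block-diagonal structure after restriction to $L$; once those are in place, the individual cokernel computations are standard Euler-sequence identifications, and the decomposition $V = W \oplus W'$ adapted to $L$ is what makes everything transparent.
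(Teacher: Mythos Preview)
Your argument is correct, but it takes a different route from the paper's. The paper observes that the presentation
\[
0 \to V \otimes \mathcal{O}_{\mathbb{P}(V)}(1) \to \Sym^2 V \otimes \mathcal{O}_{\mathbb{P}(V)}(2) \to \mathcal{N}_{X/\mathbb{P}(\Sym^2 V)} \to 0
\]
coincides with the second symmetric power of the Euler sequence, yielding a \emph{global} isomorphism $\mathcal{N}_{X/\mathbb{P}(\Sym^2 V)} \cong \Sym^2 T_{\mathbb{P}(V)}$. After that, the restriction to $L$ follows at once from the standard splitting $T_{\mathbb{P}(V)}|_L \cong \mathcal{O}_L(2) \oplus \mathcal{O}_L(1)^{\oplus(n-1)}$ by taking $\Sym^2$. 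You instead restrict the presenting matrix $\phi$ to $L$ first and read off the cokernel from its block structure relative to $V = W \oplus W'$; your three blocks $\Sym^2 W$, $W\otimes W'$, $\Sym^2 W'$ are in effect rediscovering the decomposition $\Sym^2(T_{\mathbb{P}(V)}|_L)$ by hand. The paper's approach is cleaner and reusable (it is invoked again for the rational normal curve restriction), while yours is more elementary in that it avoids recognizing the global $\Sym^2 T$ structure and would adapt to situations where no such identification is available. Note also that your $\phi$ differs from the paper's $\Theta'$ by the diagonal factor of $2$ (your left column is $\times 2$ rather than the identity); this is just a choice of coordinates on $\Sym^2 V$ and does not affect the cokernel.
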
 

We also determine the line bundle decomposition of Veronese normal bundles restricted to rational normal curves.

\begin{theorem}\label{restrictionTheoremRNC}
Let $k$ be an algebraically closed field of characteristic 0. Let $v_{n,2}: \mathbb{P}(V) \to \mathbb{P}(\Sym^2 V)$ be a degree $2$ Veronese embedding, where $V$ is a $k$-vector space of dimension $n+1$. Let $X$ denote the Veronese variety and let $R$ denote a rational normal curve of degree $n$ in $\mathbb{P}(V)$. Then 
    \[
    \mathcal{N}_{X/\mathbb{P}(\Sym^2 V)}|_R  \cong \bigoplus_{i=1}^{\frac{n(n+1)}{2}} \mathcal{O}_{\mathbb{P}^1}(2n+2). 
    \]
\end{theorem}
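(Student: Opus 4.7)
The plan is to describe $\mathcal{N}_{X/\mathbb{P}(\Sym^2 V)}$ intrinsically on $\mathbb{P}(V)$ and then restrict. The key identification, presumably already established in the proof of Theorem \ref{restrictionTheoremLine}, is that under $X \cong \mathbb{P}(V)$ induced by $v_{n,2}$,
\[
\mathcal{N}_{X/\mathbb{P}(\Sym^2 V)} \cong \Sym^2 T_{\mathbb{P}(V)};
\]
this is consistent with Theorem \ref{restrictionTheoremLine}, which one recovers from $T_{\mathbb{P}(V)}|_L \cong \mathcal{O}_{\mathbb{P}^1}(2)\oplus\mathcal{O}_{\mathbb{P}^1}(1)^{\oplus(n-1)}$ via the $\Sym^2$ decomposition. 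Granting this identification, the theorem reduces to the balanced splitting
\[
T_{\mathbb{P}(V)}|_R \cong \mathcal{O}_{\mathbb{P}^1}(n+1)^{\oplus n},
\]
since $\mathcal{N}|_R \cong \Sym^2\bigl(\mathcal{O}_{\mathbb{P}^1}(n+1)^{\oplus n}\bigr) \cong \mathcal{O}_{\mathbb{P}^1}(2n+2)^{\oplus n(n+1)/2}$.

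To establish $\mathcal{N} \cong \Sym^2 T_{\mathbb{P}(V)}$, I would argue fiberwise. At $[x] \in \mathbb{P}(V)$, use $T_{\mathbb{P}(V),[x]} = \Hom(\langle x\rangle, V/\langle x\rangle)$ and $T_{\mathbb{P}(\Sym^2 V),[x^2]} = \Hom(\langle x^2\rangle, \Sym^2 V/\langle x^2\rangle)$. The differential of $v_{n,2}$ sends $\phi\colon x \mapsto \bar v$ to $x^2 \mapsto 2xv \pmod{\langle x^2\rangle}$, whose image is $xV/\langle x^2\rangle$. The resulting cokernel is $(\Sym^2 V/xV)\otimes\langle x\rangle^{\otimes -2} \cong \Sym^2(V/\langle x\rangle)\otimes\langle x\rangle^{\otimes -2}$. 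Globalizing via the universal quotient $Q = T_{\mathbb{P}(V)}(-1)$ (fiber $V/\langle x\rangle$ at $[x]$) and cancelling twists, $\mathcal{N} \cong \Sym^2 Q \otimes \mathcal{O}_{\mathbb{P}(V)}(2) \cong \Sym^2 T_{\mathbb{P}(V)}$.

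For the splitting of $T_{\mathbb{P}(V)}|_R$, restrict the Euler sequence on $\mathbb{P}(V)$ to $R \cong \mathbb{P}^1$, using $\mathcal{O}_{\mathbb{P}(V)}(1)|_R \cong \mathcal{O}_{\mathbb{P}^1}(n)$:
\[
0 \to \mathcal{O}_{\mathbb{P}^1} \to V\otimes\mathcal{O}_{\mathbb{P}^1}(n) \to T_{\mathbb{P}(V)}|_R \to 0.
\]
By Grothendieck's theorem, $T_{\mathbb{P}(V)}|_R \cong \bigoplus_{i=1}^n \mathcal{O}_{\mathbb{P}^1}(a_i)$ with $\sum a_i = n(n+1)$. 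It suffices to prove the bound $a_i \leq n+1$ for each $i$: the rank and total degree then force every $a_i = n+1$. This bound is equivalent to $H^0(T_{\mathbb{P}(V)}|_R(-n-2)) = 0$. Twisting the Euler sequence by $\mathcal{O}_{\mathbb{P}^1}(-n-2)$ and taking cohomology, since $H^0(\mathcal{O}_{\mathbb{P}^1}(-n-2)) = H^0(\mathcal{O}_{\mathbb{P}^1}(-2)) = 0$, it reduces to showing the connecting map
\[
H^1(\mathcal{O}_{\mathbb{P}^1}(-n-2)) \to H^1(\mathcal{O}_{\mathbb{P}^1}(-2))^{\oplus(n+1)}
\]
(induced by multiplication by the restricted coordinate sections $(\sigma^{n-k}\tau^k)_{k=0}^n$) is an isomorphism. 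A direct \v{C}ech calculation confirms this: the basis class $\sigma^{-1-\ell}\tau^{-n-1+\ell}$ maps to $\sigma^{-1}\tau^{-1}$ in the $k = n-\ell$ component and to coboundaries in the others.

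The main obstacle will be the global identification $\mathcal{N} \cong \Sym^2 T_{\mathbb{P}(V)}$: the fiberwise picture is immediate, but turning it into a global isomorphism demands careful bookkeeping of the $\mathcal{O}(1)$-twists coming from both source and target projective spaces. After that, the remaining steps — the \v{C}ech calculation for the splitting of $T_{\mathbb{P}(V)}|_R$ and the final $\Sym^2$ decomposition — are essentially mechanical.
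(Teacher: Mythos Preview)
Your overall strategy matches the paper's: both reduce via the identification $\mathcal{N}_{X/\mathbb{P}(\Sym^2 V)} \cong \Sym^2 T_{\mathbb{P}(V)}$ (which the paper indeed establishes in the proof of Theorem~\ref{restrictionTheoremLine}, by observing that the map $\Theta'$ of Lemma~\ref{normalTwistSES} coincides with the inclusion in the symmetrized Euler sequence) to the balanced splitting $T_{\mathbb{P}(V)}|_R \cong \mathcal{O}_{\mathbb{P}^1}(n+1)^{\oplus n}$. The difference lies in how this last splitting is proved. The paper constructs an auxiliary short exact sequence on $\mathbb{P}^1$ by symmetrizing the Euler sequence of $\mathbb{P}^1$ to degree $n$, twisting, and dualizing, obtaining a sequence whose cokernel is visibly $\Sym^{n-1}H^0(\mathcal{O}_{\mathbb{P}^1}(1))\otimes\mathcal{O}_{\mathbb{P}^1}(n+1)$; it then checks in coordinates that this sequence is literally the restricted Euler sequence. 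Your route is instead a vanishing argument: bound all $a_i\le n+1$ by showing $H^0(T_{\mathbb{P}(V)}|_R(-n-2))=0$ via injectivity of the induced map $H^1(\mathcal{O}_{\mathbb{P}^1}(-n-2))\to H^1(\mathcal{O}_{\mathbb{P}^1}(-2))^{\oplus(n+1)}$ (a minor slip: this is the map induced on $H^1$ by the sheaf inclusion, not a connecting homomorphism). Your \v{C}ech computation is correct and arguably quicker; the paper's version is more structural in that it produces an explicit isomorphism rather than a numerical bound. Your fiberwise derivation of $\mathcal{N}\cong\Sym^2 T_{\mathbb{P}(V)}$ is also fine, though the paper's comparison of exact sequences avoids the twist bookkeeping you flag as the main obstacle.
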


Facts of the type of Theorems \ref{restrictionTheoremLine} and \ref{restrictionTheoremRNC} can be applied to understand the restrictions of normal bundles to curves of higher degree and genus. For example, Proposition \ref{restrictionTheoremRNC} was used in \cite{shang} to show the existence of certain auxiliary curves which helped proved interpolation for degree 2 Veronese varieties of odd dimension.

\subsection{Outline and Notation} This paper is structured as follows. In Section \ref{preliminaries}, to keep our paper relatively self-contained, we recall the basic theory of Gieseker and slope (semi)stability and reproduce some examples of Gieseker semistable sheaves. These examples will aid us in proving Theorem \ref{mainResult1} in Section \ref{mainSection}. We prove Theorem \ref{restrictionTheoremLine} and Theorem \ref{restrictionTheoremRNC} in Section \ref{restrictionSection}.
\par
Let us establish notation used throughout this paper. In the literature, the terms $\mu$-(semi)stable and slope (semi)stable are used interchangeably -- in this paper, we only use the term slope (semi)stable. Furthermore, sometimes the terms (semi)stable and Gieseker (semi)stable are used interchangeably in the literature -- in this paper, we only use the term Gieseker (semi)stable. Throughout this paper we work over an algebraically closed field $k$ of characteristic 0. For convenience, unless specified otherwise, we fix the embedding of a dimension $n$ and degree $d$ Veronese variety to be given as follows: let V be a $k$ vector space of dimension $n+1$. Let $v_{n,d}: \mathbb{P}(V) \hookrightarrow \mathbb{P}(\Sym^d V)$ be the $d$-th power embedding so that $v_{n,d}([L]) = [L^d]$, where $L \subset V$ is a one dimensional subspace. Let $X$ denote the Veronese image under $v_{n,d}$, where the $n$ and $d$ associated to $X$ will always be clear from context.

\section{Preliminaries}\label{preliminaries}

In this section, we recall the basic theory of Giesker and slope (semi)stability and, to keep our paper relatively self-contained, we sketch a known example that helps us prove Theorem \ref{mainResult1}. This entire section follows Sections 1.1 to 1.4 of \cite{HuybrechtsLehn}. 

\subsection{Gieseker and Slope Semistability, Harder-Narasimhan filtration}

By the Birkhoff-Grothendieck theorem \cite{birkhoffGrothendieck}, we understand vector bundles over $\mathbb{P}^1$ quite well. Any vector bundle $\mathcal{V}$ over $\mathbb{P}^1$ decomposes into a direct sum of line bundles, and thus admits a decomposition
\[
\mathcal{V} \cong \bigoplus_{i=1}^{n_1} \mathcal{O}_{\mathbb{P}^1}(a_1) \oplus \cdots \oplus \bigoplus_{i=1}^{n_t} \mathcal{O}_{\mathbb{P}^1}(a_t),
\]
where $a_1 >  \cdots > a_t$. There is a natural filtration here, where the first filtration block is $\bigoplus_{i=1}^{n_1} \mathcal{O}_{\mathbb{P}^1}(a_1)$, the second filtration block is $\bigoplus_{i=1}^{n_1} \mathcal{O}_{\mathbb{P}^1}(a_1) \oplus \bigoplus_{i=1}^{n_2} \mathcal{O}_{\mathbb{P}^1}(a_2)$, and so on. Any endomorphism of the vector bundle $\mathcal{V}$ must respect this filtration.  
\par
In general, the Birkhoff-Grothendieck theorem does not hold for smooth projective $k$-varieties beyond $\mathbb{P}^1$. For example, the tangent bundle $T \mathbb{P}^n$ for projective space does not split for $n > 1$. However, the filtration we have described here generalizes to the Harder-Narasimhan filtration. Before we define the Harder-Narasimhan filtration, let us first define what a Gieseker (semi)stable sheaf is. First, we require the notion of purity of a sheaf.

\begin{definition}
Let $X$ be a Noetherian scheme. We say $E \in Coh(X)$ is pure of dimension $d$ if the support of $E$ has dimension $d$ and, for every nonzero subsheaf $F \subset E$, the support of $F$ has dimension $d$.
\end{definition}

Let $X$ be a projective $k$-scheme and let $E$ be a coherent sheaf on $X$ of dimension $d$. Fixing a very ample line bundle $\mathcal{O}_X(1)$, we can write the Hilbert polynomial of $E$ as
\[
P(E)(m) := \chi(E(m)) = \sum_{i=0}^d \alpha_i(E) \frac{m^i}{i!}
\]
where $\alpha_i(E)$ are integers and $\alpha_d(E)$ is positive. This follows from Lemma 1.2.1 of \cite[Page 9]{HuybrechtsLehn}. We define the reduced Hilbert polynomial to be $\rho(E) := \frac{P(E)}{\alpha_d(E)}$. 

\begin{definition}
Let $X$ be a projective $k$-scheme and $E \in Coh(X)$. Then $E$ is Gieseker semistable if it is pure and for every nonzero subsheaf $F \subset E$, 
\[
\rho(F) \leq \rho(E). 
\]
Replacing the inequality $\leq$ by a strict inequality $<$ gives the definition of a Gieseker stable sheaf. 
\end{definition}

A closely related notion is slope (semi)stability.

\begin{definition}
    Let $X$ be a projective $k$-scheme and let $E \in Coh(X)$ be of dimension $d = \dim (X)$. The rank of $E$ is defined to be 
\[
\rk(E) := \frac{\alpha_d(E)}{\alpha_d(\mathcal{O}_X)},
\]
the degree of $E$ is defined to be
\[
\deg E := \alpha_{d-1}(E) - \rk(E) \alpha_{d-1}(\mathcal{O}_X), 
\]
and the slope of $E$ is defined to be 
\[
\mu(E) := \frac{\deg (E)}{\rk(E)}.
\]
Then $E$ is slope semistable if and only if for all $F \subset E$ such that $0 < \rk(F) < \rk(E)$, we have $\rk(E) \deg (F) \leq \rk(F) \deg (E)$. Replacing the inequality $\leq$ by a strict inequality $<$ gives the definition of a slope stable sheaf.
\end{definition}

Gieseker and slope (semi)stability are related in the following way.

\begin{proposition}\cite[Lemma 1.2.13, page 14]{HuybrechtsLehn}\label{giesekerSlopeRelationship}
    When $E \in Coh(X)$ is a pure sheaf of dimension $d = \dim(X)$, then 
    \[
    \text{E is slope stable} \implies \text{E is Gieseker stable } \implies \text{E is Gieseker semistable } \implies \text{E is slope semistable}.
    \]
\end{proposition}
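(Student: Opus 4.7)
The plan is to reduce both nontrivial implications to an elementary comparison of Hilbert polynomial coefficients, after recording the dictionary between the slope and the reduced Hilbert polynomial.

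First, I would unravel the definitions to observe that for a pure coherent sheaf $E$ of dimension $d$,
\[
\mu(E) = \alpha_d(\mathcal{O}_X) \cdot \frac{\alpha_{d-1}(E)}{\alpha_d(E)} - \alpha_{d-1}(\mathcal{O}_X),
\]
so $\mu(E)$ is a strictly increasing affine function of the $(d-1)$-coefficient of $\rho(E)$ with constants depending only on $X$. Hence comparing slopes of two pure sheaves of dimension $d$ is the same as comparing the $(d-1)$-coefficients of their reduced Hilbert polynomials.

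Next, for Gieseker semistable $\implies$ slope semistable, take a proper nonzero $F \subset E$ with $0 < \rk(F) < \rk(E)$. Purity of $E$ propagates to $F$, since any nonzero subsheaf of $F$ is a nonzero subsheaf of $E$, hence has $\Supp$ of dimension $d$. Gieseker semistability gives $\rho(F)(m) \leq \rho(E)(m)$ for $m \gg 0$, and both leading $m^d$-coefficients are $1/d!$, so the inequality must already hold at the $(d-1)$-coefficient, yielding $\mu(F) \leq \mu(E)$.

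Finally, for slope stable $\implies$ Gieseker stable, I would split on the rank of a proper nonzero $F \subset E$. If $\rk(F) < \rk(E)$, slope stability gives $\mu(F) < \mu(E)$, which is a strict inequality at the $(d-1)$-coefficient of $\rho$, so $\rho(F) < \rho(E)$ for $m \gg 0$. If $\rk(F) = \rk(E)$, then $\alpha_d(E/F) = 0$ while $E/F \neq 0$, so $E/F$ has some dimension $e < d$ with $\alpha_e(E/F) > 0$ and $\alpha_i(E/F) = 0$ for $i > e$. Thus $P(F)$ and $P(E)$ agree in their $m^i$-coefficients for $i > e$ but differ strictly at $m^e$, so $\rho(F) < \rho(E)$ eventually. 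The middle implication is immediate. The only real subtlety is the $\rk(F) = \rk(E)$ case, where slope stability says nothing; there one must invoke purity of $E$ to control $\dim \Supp(E/F)$ and extract strictness from a lower-order coefficient of the Hilbert polynomial.
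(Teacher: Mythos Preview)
The paper does not supply its own proof of this proposition; it is quoted verbatim from Huybrechts--Lehn and used as a black box. Your argument is correct and is essentially the standard proof found there: translate the slope into the $(d-1)$-st coefficient of the reduced Hilbert polynomial, and then compare polynomials term by term.

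One small inaccuracy in your closing remark: in the case $\rk(F)=\rk(E)$, it is not purity of $E$ that forces $\dim\Supp(E/F)<d$; that follows immediately from $\alpha_d(E/F)=\alpha_d(E)-\alpha_d(F)=0$. Purity is used elsewhere---to guarantee that every nonzero $F\subset E$ already has $\alpha_d(F)>0$, so that $\rho(F)$ is defined and its leading term matches that of $\rho(E)$---but not to bound the support of the quotient.
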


On a smooth projective variety, the Hirzebruch-Riemann-Roch formula implies that $\deg E = c_1(E)$, and if $X$ is reduced and irreducible and $E$ is the sheaf of sections of a rank $r$ vector bundle, then $\rk(E) = r$. Thus, for the vector bundles we work with in this paper, their slope is given by the formula $\mu(E) = \frac{c_1(E)}{\rk(E)}$. Furthermore, slope semistability is preserved under the following operations.

\begin{lemma}\cite[Chapter 3.2]{HuybrechtsLehn}\label{slopeSemistablePreservedTensor}
    The tensor product of slope semistable sheaves is again slope semistable. 
\end{lemma}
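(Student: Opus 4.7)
The plan is to reduce the general statement to the case of smooth projective curves via a restriction theorem of Mehta--Ramanathan type, and then invoke the Narasimhan--Seshadri correspondence (available because $k$ has characteristic zero) on curves. Let $E$ and $F$ be torsion-free slope semistable sheaves on a smooth projective variety $X$ of dimension $n$ with fixed polarization $\mathcal{O}_X(1)$. We want to show $\mu(G) \leq \mu(E) + \mu(F) = \mu(E \otimes F)$ for every nonzero subsheaf $G \subset E \otimes F$.

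For the reduction step, I would invoke the Mehta--Ramanathan restriction theorem: if $E$ is slope semistable on $X$ of dimension at least two, then for each sufficiently large $d$ there is a dense open locus in $|\mathcal{O}_X(d)|$ of smooth divisors $D$ such that $E|_D$ is again slope semistable. Iterating and intersecting finitely many such open loci (one for $E$, one for $F$, and one for the putative maximal destabilizer of $E \otimes F$), one obtains a smooth general complete intersection curve $C = H_1 \cap \cdots \cap H_{n-1}$ on which $E|_C$, $F|_C$, and the relevant subsheaf $G|_C \subset (E \otimes F)|_C \cong E|_C \otimes F|_C$ all retain the data we need. If $\mu(G) > \mu(E \otimes F)$ on $X$, then by choosing $d_1, \dots, d_{n-1}$ proportional so that slopes compare consistently, $G|_C$ would destabilize $E|_C \otimes F|_C$ on the curve $C$, contradicting the one-dimensional case.

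For the curve case, after twisting by appropriate line bundles I would reduce to slope zero. The Narasimhan--Seshadri theorem then identifies slope semistable bundles of degree zero on $C$, up to $S$-equivalence, with unitary representations of $\pi_1(C)$. Since the tensor product of two unitary representations is unitary, the corresponding bundle is polystable, hence semistable; passing back through $S$-equivalence (and using that a bundle whose associated graded is semistable is itself semistable) gives the conclusion $\mu(E|_C \otimes F|_C) \leq \mu(E|_C) + \mu(F|_C)$, with equality.

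The hard part is the curve case: Narasimhan--Seshadri is a deep transcendental result, and a purely algebraic alternative would require Ramanathan's generalization to principal $G$-bundles or Kempf's numerical criterion from geometric invariant theory, either of which is technical. A secondary obstacle is bookkeeping in the Mehta--Ramanathan reduction: one must ensure that slope on the curve and slope on $X$ differ by the uniform factor $\prod d_i$, so that strict destabilization on $X$ really transfers to strict destabilization on $C$. This is standard but requires picking the $d_i$ carefully and handling torsion-free (rather than locally free) $G$ by reflexive hull or saturation.
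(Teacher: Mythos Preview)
The paper does not supply its own proof of this lemma; it merely records the statement and cites \cite[Chapter~3.2]{HuybrechtsLehn}. So there is no in-paper argument to compare against, and your proposal is really a sketch of one of the standard proofs that lies behind that citation.

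Your outline is sound and follows a classical route: Mehta--Ramanathan restriction to reduce to curves, then Narasimhan--Seshadri on the curve. This is essentially the differential-geometric proof (the curve case of Kobayashi--Hitchin), and Huybrechts--Lehn themselves point to this line of argument, alongside the purely algebraic proof of Ramanan--Ramanathan via instability flags. A couple of points deserve tightening. First, ``twisting by appropriate line bundles'' does not in general bring the degree to zero unless the rank divides the degree; the usual fix is to pull back along a finite cyclic cover of the curve, which preserves semistability and lets you hit degree zero exactly. Second, Narasimhan--Seshadri identifies \emph{polystable} (not semistable) degree-zero bundles with unitary representations, so the clean way to run your argument is: pass to the Jordan--H\"older gradeds of $E|_C$ and $F|_C$, tensor the stable factors pairwise (each such tensor is polystable by Narasimhan--Seshadri), and then observe that $E|_C\otimes F|_C$ carries a filtration whose successive quotients are semistable of the common slope $\mu(E|_C)+\mu(F|_C)$, hence is itself semistable. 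Your parenthetical about $S$-equivalence gestures at this but is a bit compressed. Finally, for the Mehta--Ramanathan step you should note that you may assume $E$ and $F$ are locally free by restricting to the open set where both are (its complement has codimension $\ge 2$, so slopes are unaffected), which makes the identification $(E\otimes F)|_C \cong E|_C\otimes F|_C$ automatic.
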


Furthermore, on a sufficiently nice scheme, the dual of a slope semistable vector bundle is again slope semistable. This fact is difficult to locate in the literature, so we provide a complete proof here. 

\begin{lemma}\label{slopeSemistableDualPreserved}
    Let $X$ be a normal integral projective $k$-variety, and let $E$ be a slope semistable vector bundle on $X$. Then the dual $E^*$ is also slope semistable.
\end{lemma}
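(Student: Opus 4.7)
The plan is to argue by contradiction: from any hypothetical destabilizing subsheaf $F \subset E^*$, I will produce a destabilizing subsheaf $G \subset E$ by duality, contradicting slope semistability of $E$.

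Concretely, suppose $F \subset E^*$ satisfies $0 < \rk F < \rk E$ and $\mu(F) > \mu(E^*) = -\mu(E)$. Replacing $F$ by its saturation in $E^*$ only increases $\mu(F)$, so we may assume $E^*/F$ is torsion-free. Dualizing the short exact sequence $0 \to F \to E^* \to E^*/F \to 0$ yields a left exact sequence
\[
0 \to (E^*/F)^* \to E^{**} \to F^*,
\]
and since $E$ is locally free, $E^{**} = E$. Set $G := (E^*/F)^* \subset E$; then $\rk G = \rk E - \rk F$, so $0 < \rk G < \rk E$.

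Next I would show $\mu(G) > \mu(E)$, giving the contradiction. The quotient $E/G$ injects into $F^*$ with the same rank, so the cokernel is torsion and $\deg(E/G) \leq \deg(F^*)$. Combined with the identity $\deg(F^*) = -\deg(F)$, this yields $\deg(G) \geq \deg(E) + \deg(F)$, and a direct manipulation of the hypothesis $\mu(F) > -\mu(E)$ then gives $\mu(G) > \mu(E)$.

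The main technical obstacle is justifying the identity $\deg(F^*) = -\deg(F)$ for the torsion-free, possibly non-locally-free sheaf $F$ on the normal variety $X$. Since degree is determined by the first Chern class of the determinant and thus depends only on codimension-one data, and since normality of $X$ means the non-smooth locus has codimension at least two, one can work on the smooth locus $X^{\mathrm{sm}}$, where $F$, $F^{**}$, and $F^*$ are locally free outside a further codimension-two subset (using that $F \to F^{**}$ is an isomorphism in codimension one and that reflexive sheaves on a smooth variety are locally free in codimension two). The identity then reduces to the standard $c_1(F^*) = -c_1(F)$ for locally free sheaves.
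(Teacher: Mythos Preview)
Your proof is correct and follows essentially the same strategy as the paper's: saturate $F$, dualize to obtain the subsheaf $G = (E^*/F)^* \subset E$, apply slope semistability of $E$, and handle the non--locally-free locus via the fact that degree is a codimension-one invariant on a normal variety. The only cosmetic difference is packaging---the paper restricts the short exact sequence to a big open $U$ where all sheaves involved are locally free and carries out the computation there, whereas you work globally and instead justify $\deg(F^*) = -\deg(F)$ directly; the underlying argument is the same.
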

\begin{proof}
    Let $F \subseteq E^*$ be a subsheaf with $0 < \rk(F) < \rk E^*$. We would like to show that 
    \[
    \rk(E^*) \deg (F) \leq \rk(F) \deg (E^*).
    \]
    By expanding and canceling like terms, we see that showing this inequality is equivalent to showing the inequality
    \[
    \alpha_d(E^*) \alpha_{d-1}(F) \leq \alpha_d(F) \alpha_{d-1}(E^*),
    \]
    where $d = \dim X$. Let $F'$ denote the saturation of $F$ with respect to $E^*$, where the saturation is defined in \cite[Definition 1.1.5]{HuybrechtsLehn}. In particular, the cokernel $F'/F$ is supported on a codimension 1 locus of $X$. Thus, $\alpha_d(F) = \alpha_d(F')$ and $\alpha_{d-1}(F') \geq \alpha_{d-1}(F)$. Now consider the short exact sequence 
    \[
    0 \to F' \to E^* \to E^*/F' \to 0.
    \]
   Since $E^*/F'$ is pure of dimension $d$, it is torsion free. Thus, since $X$ is a normal integral variety over an algebraically closed field, by \cite[Proposition 5.1.7]{shihoko}, $E^*/F'$ is locally free on an open $U \subset X$ such that $\codim (X \setminus U) \geq 2$. Restricting to $U$ yields the short exact sequence
    \[
    0 \to F'|_U \to E^*|_U \to (E^*/F')|_U \to 0
    \]
    of locally free sheaves, and dualizing yields the short exact sequence
    \[
    0 \to (E^*/F')|_U^* \to E|_U \to (F'|_U)^* \to 0.
    \]
    Note that since $\codim (X \setminus U) \geq 2$, the Hilbert polynomial coefficients $\alpha_d$ and $\alpha_{d-1}$ of $(E^*/F'|_U)^*$ and $E|_U$ are the same as those of $(E^*/F')^*$ and $E$, respectively. Thus,  
    \[
    \rho((E^*/F'|_U)^*) = \rho(E^*/F')^* \leq \rho(E) = \rho(E|_U).
    \]
    Since $E|_U$ and $(E^*/F')|_U^*$ are locally free, we can immediately deduce that $\rho(E^*/F'|_U) \geq \rho(E^*|_U)$, which is equivalent to
    \[
    \alpha_d(E^*/F'|_U) \alpha_{d-1}(E^*|_U) \leq \alpha_d(E^*|_U) \alpha_{d-1}(E^*/F'|_U).
    \]
    Using again that $\codim(X \setminus U) \geq 2$, this implies  
    \[
    \alpha_d(E^*/F') \alpha_{d-1}(E^*) \leq \alpha_d(E^*) \alpha_{d-1}(E^*/F').
    \]
    From this, since $\alpha_d(F') + \alpha_d(E^*/F') = \alpha_d(E^*)$ and $\alpha_{d-1}(F') + \alpha_{d-1}(E^*/F') = \alpha_{d-1}(E^*)$, we obtain 
    \[
    \alpha_d(E^*) \alpha_{d-1}(F') \leq \alpha_d(F') \alpha_{d-1}(E^*),
    \]
    which implies the desired inequality.
\end{proof}
We are now ready to define the Harder-Narasimhan filtration. First, every pure coherent sheaf admits a maximal destabilizing subsheaf.

\begin{lemma} \cite[Lemma 1.3.5, Page 16]{HuybrechtsLehn}
Let $E$ be a pure coherent sheaf of dimension $d$. Then there exists a subsheaf $F \subset E$ such that for all subsheaves $G \subset E$, one  has $\rho(F) \geq \rho(G)$, and in case of equality, $F \subset G$. This $F$ is called the maximal destabilizing subsheaf, and is uniquely determined and Gieseker semistable. 
\end{lemma}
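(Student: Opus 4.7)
The plan is to construct $F$ as the unique minimum element (under inclusion) of the set
\[
\mathcal{S} := \{G \subset E : \rho(G) = \rho_{\max}\}, \quad \rho_{\max} := \sup\{\rho(G) : 0 \neq G \subset E\},
\]
so that $F \subset G$ for every $G \in \mathcal{S}$, and then deduce uniqueness, Gieseker semistability, and the containment claim in the statement.

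First, I would verify that $\rho_{\max}$ is attained. By Grothendieck's boundedness for families of subsheaves of fixed dimension $d$, the invariants $\alpha_d(G), \alpha_{d-1}(G)$ over subsheaves $G \subset E$ are bounded in terms of those of $E$, so the supremum $\rho_{\max}$ in the eventual-value ordering on polynomials is achieved; in particular $\mathcal{S}$ is nonempty.

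The central tool is the additivity of Hilbert polynomials in the short exact sequence
\[
0 \to F_1 \cap F_2 \to F_1 \oplus F_2 \to F_1 + F_2 \to 0,
\]
giving $P(F_1) + P(F_2) = P(F_1 \cap F_2) + P(F_1 + F_2)$. Applied to $F_1, F_2 \in \mathcal{S}$, and combined with $\rho(F_1 + F_2) \leq \rho_{\max}$ and $\rho(F_1 \cap F_2) \leq \rho_{\max}$ (the latter valid whenever $F_1 \cap F_2 \neq 0$), this forces $\rho(F_1 + F_2) = \rho_{\max}$, and likewise $\rho(F_1 \cap F_2) = \rho_{\max}$ provided $F_1 \cap F_2 \neq 0$. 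So $\mathcal{S}$ is closed under sum and under nonzero intersection. Setting $F := \bigcap_{G \in \mathcal{S}} G$ gives the natural candidate, and showing $F \in \mathcal{S}$ (in particular $F \neq 0$) will yield $F \subset G$ for every $G \in \mathcal{S}$, i.e.\ the containment required in case of equality.

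The main obstacle is ruling out the collapse $\bigcap_{G \in \mathcal{S}} G = 0$: the additivity only preserves $\mathcal{S}$-membership for nonzero intersections, so one must exclude collapse. Here the purity hypothesis on $E$ is essential, since every nonzero subsheaf of a pure sheaf has positive $\alpha_d$; combined with the closure of $\mathcal{S}$ under sum (which yields a single largest element $\sum_{G \in \mathcal{S}} G \in \mathcal{S}$ bounding the ambient structure) and careful analysis of the poset of subsheaves of that ambient element, one hopes to leverage purity to control the intersection and produce a genuine minimum $F$. Once $F$ is constructed, Gieseker semistability is immediate: any proper $F'' \subsetneq F$ with $\rho(F'') > \rho(F) = \rho_{\max}$ contradicts the definition of $\rho_{\max}$; and uniqueness of $F$ follows formally from the minimum-element characterization.
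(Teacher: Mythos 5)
Your plan cannot be completed, and the obstacle you flag as ``the main obstacle'' is not a technical gap but a genuine falsehood: the set $\mathcal{S}$ need not have a minimum. Take $E = \mathcal{O}_{\mathbb{P}^1} \oplus \mathcal{O}_{\mathbb{P}^1}$, which is pure of dimension $1$. Here $\rho_{\max} = m+1$: a rank-$1$ subsheaf is contained in a line subbundle, which projects nontrivially to $\mathcal{O}_{\mathbb{P}^1}$ and hence has nonpositive degree, while a rank-$2$ subsheaf $G$ has torsion quotient $E/G$, so $P(G) \leq P(E)$ and $\rho(G) \leq \rho(E) = m+1$. Both factors $\mathcal{O}_{\mathbb{P}^1} \oplus 0$ and $0 \oplus \mathcal{O}_{\mathbb{P}^1}$ lie in $\mathcal{S}$, yet they intersect in $0$, so no nonzero $F$ is contained in every member of $\mathcal{S}$. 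What this reveals is that the inclusion in the statement is a typo carried over from the source: in Huybrechts--Lehn, Lemma 1.3.5, the conclusion in case of equality is $F \supset G$, not $F \subset G$; the maximal destabilizing subsheaf is the \emph{maximum} of $\mathcal{S}$, which is also what the name means and what the iterative construction of the Harder--Narasimhan filtration (quotient by $F$ and repeat) requires. Since the counterexample is locally free, no appeal to purity can rescue the minimum-element version.

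The redeeming feature is that your own intermediate results prove the correct statement once you aim at the maximum instead. Your additivity argument via $0 \to F_1 \cap F_2 \to F_1 \oplus F_2 \to F_1 + F_2 \to 0$, using purity to know that nonzero subsheaves have dimension $d$, correctly shows $\mathcal{S}$ is closed under finite sums. Because $E$ is Noetherian, the directed system of finite sums of members of $\mathcal{S}$ stabilizes, so $F := \sum_{G \in \mathcal{S}} G$ is a finite sum, lies in $\mathcal{S}$, and contains every member of $\mathcal{S}$; semistability and uniqueness then follow exactly as in your last paragraph. Two further caveats. First, your attainment argument for $\rho_{\max}$ is insufficient as stated: bounding $\alpha_d(G)$ and $\alpha_{d-1}(G)$ controls only the top two coefficients of $\rho(G)$, not the lower ones, so it does not by itself produce a maximal polynomial; one either needs the full Grothendieck boundedness lemma (applied to saturated subsheaves, noting $\rho$ does not decrease under saturation, so that only finitely many Hilbert polynomials occur), which in Huybrechts--Lehn is proved much later than Lemma 1.3.5, or one avoids boundedness altogether. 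Second, the route the paper actually sketches does the latter: apply Zorn's lemma to the partial order defined by $F_1 \leq F_2$ if and only if $F_1 \subset F_2$ and $\rho(F_1) \leq \rho(F_2)$ (chains stabilize since $E$ is Noetherian), and use additivity of Hilbert polynomials across short exact sequences to show a suitably chosen $\leq$-maximal element has the required properties. That argument is self-contained at this point of the theory, whereas yours, even after redirection, rests on a deeper boundedness input.
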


The existence of the maximal destabilizing subsheaf boils down to Zorn's lemma and the additivity of Hilbert polynomials with respect to short exact sequences of coherent sheaves. Now given a pure coherent sheaf $E$ and maximal destabilizing subsheaf $F$, if $E/F \neq 0$, then $E/F$ will also admit a maximal destabilizing subsheaf that can be lifted to a subsheaf of $E$ which contains $F$. Iterating this procedure yields the Harder-Narasimhan filtration. 

\begin{theorem} \cite[Theorem 1.3.4, Page 16]{HuybrechtsLehn}
Let $X$ be a projective $k$-scheme and let $E \in Coh(X)$ be pure of dimension $d = \dim(X)$. Then $E$ admits a Harder-Narasimhan filtration 
\[
0 \subset HN_1(E) \subset \cdots \subset HN_\ell(E) = E,
\]
where $HN_i(E)/HN_{i-1}(E)$ is Gieseker semistable of dimension $d$ for $1 \leq i \leq \ell$, and defining $\rho_i(E) := \rho (\frac{HN_i(E)}{HN_{i-1}(E)})$, we have 
\[
 \rho_1 > \cdots > \rho_\ell.
\]
The Harder-Narasimhan filtration exists uniquely. 
\end{theorem}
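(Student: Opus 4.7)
The plan is to construct the filtration iteratively from the maximal destabilizing subsheaf provided by the preceding lemma. Set $HN_1(E)$ equal to the maximal destabilizing subsheaf of $E$. I would first check that the quotient $E/HN_1(E)$ is either zero or pure of dimension $d$: any torsion subsheaf $T \subset E/HN_1(E)$ of dimension strictly less than $d$ would pull back to a subsheaf $F' \subset E$ strictly containing $HN_1(E)$, and since $\alpha_d(T) = 0$, additivity of the Hilbert polynomial would give $\rho(F') = \rho(HN_1(E)) + P(T)/\alpha_d(HN_1(E))$, a polynomial exceeding $\rho(HN_1(E))$ for $m \gg 0$, contradicting maximality. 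Apply the preceding lemma again to $E/HN_1(E)$ and let $HN_2(E) \subset E$ be the preimage of its maximal destabilizing subsheaf. Iterating produces a strictly ascending chain whose successive quotients $HN_i(E)/HN_{i-1}(E)$ are Gieseker semistable of dimension $d$ by construction; termination follows from Noetherianity of $E$ as a coherent sheaf on a projective scheme.

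For the strict inequality $\rho_i > \rho_{i+1}$, I would argue by contradiction. If $\rho_{i+1} \geq \rho_i$, then additivity of Hilbert polynomials on the short exact sequence
\[
0 \to HN_i(E)/HN_{i-1}(E) \to HN_{i+1}(E)/HN_{i-1}(E) \to HN_{i+1}(E)/HN_i(E) \to 0
\]
would yield $\rho(HN_{i+1}(E)/HN_{i-1}(E)) \geq \rho(HN_i(E)/HN_{i-1}(E))$, since the middle term's reduced Hilbert polynomial is a weighted average of the two outer ones. This contradicts the maximality of $HN_i(E)/HN_{i-1}(E)$ as the maximal destabilizing subsheaf of $E/HN_{i-1}(E)$, because $HN_{i+1}(E)/HN_{i-1}(E)$ would be a strictly larger subsheaf with reduced Hilbert polynomial at least as large.

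For uniqueness, given any filtration $\{HN_i'(E)\}$ satisfying the theorem's conditions, I would show that $HN_1'(E)$ equals the maximal destabilizing subsheaf of $E$, and then induct on the quotient $E/HN_1'(E)$. The key step is that for any subsheaf $G \subset E$, one filters $G$ by its intersections $G \cap HN_i'(E)$. Each successive quotient $(G \cap HN_i'(E))/(G \cap HN_{i-1}'(E))$ injects into the Gieseker semistable sheaf $HN_i'(E)/HN_{i-1}'(E)$ of reduced Hilbert polynomial $\rho_i'$, so has $\rho \leq \rho_i' \leq \rho_1'$. By additivity, $\rho(G) \leq \rho_1' = \rho(HN_1'(E))$, and careful analysis of the equality case (using that $\rho_j' < \rho_1'$ for $j \geq 2$ forces the higher subquotients to have $\alpha_d = 0$) shows $G \subset HN_1'(E)$ when equality holds. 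Hence $HN_1'(E)$ satisfies the defining universal property of the maximal destabilizing subsheaf and agrees with $HN_1(E)$.

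The main obstacle I anticipate is the uniqueness step. The construction and termination are essentially formal given the preceding lemma, and the strict inequality is a short bookkeeping argument with short exact sequences. The uniqueness requires carefully tracking how subsheaves of $E$ interact with an arbitrary candidate HN filtration, and the equality-case analysis—ensuring that a subsheaf achieving the maximal reduced Hilbert polynomial must sit inside $HN_1'(E)$—is the step most prone to subtle error.
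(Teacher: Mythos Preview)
The paper does not give a self-contained proof of this theorem; it is cited from Huybrechts--Lehn, and the only argument the paper supplies is the one-sentence sketch preceding the statement (iterate the maximal destabilizing subsheaf on successive quotients). Your proposal is a correct elaboration of exactly that sketch, filling in the purity of quotients, termination, the strict inequality of the $\rho_i$, and uniqueness via the standard intersection-filtration argument, so it matches the paper's approach in full.
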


\subsection{A useful example}\label{usefulExampleSection}

In this subsection, we sketch the main ideas of a useful example from Section 1.4 of \cite[Page 19-21]{HuybrechtsLehn}, namely that $\Sym^d V \otimes \mathcal{O}_{\mathbb{P}(V)}$ on $\mathbb{P}(V)$, along with certain subbundles which we denote as $K^i_d$ for $1 \leq i \leq d+1$, are Gieseker semistable. We have included this known example to make our paper relatively self-contained as we use the Gieseker semistability of $K^{d-1}_d$ to prove Theorem \ref{mainResult1}; the reader can find the full details of this subsection through the citations we provide.
\par
The main idea behind the Gieseker semistability of the vector bundles $K^i_d$ is as follows. Suppose a group $G$ acts on a projective $k$-scheme $X$. We say a sheaf $E$ is $G$-compatible if for every $\sigma \in G$, the action of $\sigma$ on $X$ lifts to an action $\tilde{\sigma}$ on $E$ such that the diagram
\[\begin{tikzcd}
	E & E \\
	X & X
	\arrow["{\tilde{\sigma}}", from=1-1, to=1-2]
	\arrow[from=1-2, to=2-2]
	\arrow[from=1-1, to=2-1]
	\arrow["\sigma"', from=2-1, to=2-2]
\end{tikzcd}\]
commutes. If $E \in Coh(X)$ is pure, then by uniqueness of the Harder-Narasimhan filtration on $E$, each term $HN_i(E)$ of the filtration must be a $G$-invariant subsheaf of $E$. This implies that if one can list all $G$-invariant subsheaves of $E$, then the list must include the maximal destabilizing subsheaf.
\par
We now define the subbundles $K^i_d$ of $\Sym^d V \otimes \mathcal{O}_{\mathbb{P}(V)}$. Consider the Euler exact sequence 
\[
0 \to \mathcal{O}_{\mathbb{P}(V)} \to V \otimes \mathcal{O}_{\mathbb{P}(V)}(1) \to T_{\mathbb{P}(V)} \to 0
\]
on $\mathbb{P}(V)$. Tensoring by $\mathcal{O}_{\mathbb{P}(V)}(-1)$ and then dualizing yields the short exact sequence 
\[
0 \to \Omega_{\mathbb{P}(V)}(1) \xrightarrow[]{\alpha} V \otimes \mathcal{O}_{\mathbb{P}(V)} \to \mathcal{O}_{\mathbb{P}(V)}(1) \to 0. 
\]
Symmetrizing this short exact sequence to the $i$-th degree with respect to $\alpha$ yields
\[
0 \to \Sym^i \Omega_{\mathbb{P}(V)}(1) \to \Sym^iV  \otimes \mathcal{O}_{\mathbb{P}(V)} \to \Sym^{i-1}V  \otimes \mathcal{O}_{\mathbb{P}(V)}(1) \to 0, 
\]
and tensoring by $\mathcal{O}_{\mathbb{P}(V)}(d-i)$ yields 
\[
0 \to \Sym^i \Omega_{\mathbb{P}(V)}(1) \otimes \mathcal{O}_{\mathbb{P}(V)}(d-i) \to \Sym^i V \otimes \mathcal{O}_{\mathbb{P}(V)}(d-i) \xrightarrow[]{\xi_i} \Sym^{i-1}V \otimes \mathcal{O}_{\mathbb{P}(V)}(d-i+1) \to 0.
\]
Then define the map $\delta^i_d: \Sym^d V \otimes \mathcal{O}_{\mathbb{P}(V)} \to \Sym^{d-i}V \otimes \mathcal{O}_{\mathbb{P}(V)}(i)$ to be the composition
\[
\Sym^d V \otimes \mathcal{O}_{\mathbb{P}(V)} \xrightarrow[]{\xi_d} \cdots \xrightarrow[]{\xi_{d-i+1}} \Sym^{d-i}V \otimes \mathcal{O}_{\mathbb{P}(V)}(i),
\]
and define $K^i_d$ to be the kernel of $\delta^i_d$. Note $K^{d+1}_d = \Sym^d V \otimes \mathcal{O}_{\mathbb{P}(V)}$. 
There is a natural $\PGL(V)$ action on $\mathbb{P}(V)$ and the sheaves $\Sym^t V \otimes \mathcal{O}_{\mathbb{P}(V)}, \mathcal{O}_{\mathbb{P}(V)}(t),$ and $\Omega_{\mathbb{P}(V)}$, so that the maps $\delta^i_d$ are $\PGL(V)$-equivariant and thus the $K^i_d$ are $\PGL(V)$-invariant subsheaves of $\Sym^d V \otimes \mathcal{O}_{\mathbb{P}(V)}$. In fact, the following is true.

\begin{proposition}\cite[Lemma 1.4.4]{HuybrechtsLehn} \label{onlyPGLVinvariantSubsheaves}
    The $K^i_d$ are the only nonzero $\PGL(V)$-invariant subsheaves of $\Sym^d V \otimes \mathcal{O}_{\mathbb{P}(V)}$, where $1 \leq i \leq d+1$.
\end{proposition}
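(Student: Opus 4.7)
The plan is to use the homogeneity of $\mathbb{P}(V) = \PGL(V)/P$, where $P \subset \PGL(V)$ is the stabilizer of a fixed point $[e_0] \in \mathbb{P}(V)$. Under this identification, $\PGL(V)$-invariant coherent subsheaves of a $\PGL(V)$-equivariant sheaf on $\mathbb{P}(V)$ are in bijection with $P$-subrepresentations of the fiber at $[e_0]$. Since the subsheaves $K^i_d$ for $1 \leq i \leq d+1$ are already distinct nonzero $\PGL(V)$-invariant subsheaves (they form a strict chain by a rank computation on the maps $\delta^i_d$), to prove the proposition it suffices to show that $\Sym^d V$ admits at most $d+2$ distinct $P$-subrepresentations (including the zero one).

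First I would fix the splitting $V = \langle e_0 \rangle \oplus W$ with $W = \langle e_1, \ldots, e_n \rangle$, and the Levi decomposition $P = L \ltimes U$, where $L$ preserves this splitting and $U$ is the unipotent radical. The one-parameter subgroup in the center of $L$ that scales $e_0$ while fixing $W$ yields the weight decomposition
\[
\Sym^d V = \bigoplus_{a=0}^{d} e_0^a \cdot \Sym^{d-a} W,
\]
preserved by all of $L$. Any $P$-subrepresentation $M$ respects this decomposition, so $M = \bigoplus_a M_a$. Since $\Sym^{d-a} W$ is an irreducible $\GL(W)$-representation in characteristic $0$, each $M_a$ is either $0$ or the full weight space, and $M$ is determined by the subset $S := \{a : M_a \neq 0\} \subseteq \{0, \ldots, d\}$.

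The main step uses $U$-invariance to constrain $S$. The Lie algebra $\mathfrak{u}$ is spanned by derivations $E_{0j}$ ($1 \leq j \leq n$), where $E_{0j}(e_j) = e_0$ and $E_{0j}(e_k) = 0$ for $k \neq j$. A direct computation gives $E_{0j}(e_0^a q) = e_0^{a+1} \partial_{e_j}(q)$ for $q \in \Sym^{d-a} W$. If $a \in S$ and $a < d$, then $M$ contains all such elements; in characteristic $0$, any monomial $m \in \Sym^{d-a-1} W$ with $e_j$-degree $b_j$ satisfies $m = (b_j + 1)^{-1} \partial_{e_j}(e_j \cdot m)$, so these derivatives span $\Sym^{d-a-1} W$, forcing $a+1 \in S$. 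Iterating shows $S = \{a_0, a_0 + 1, \ldots, d\}$ for some $a_0 \in \{0, 1, \ldots, d+1\}$.

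This yields at most $d+2$ distinct $P$-subrepresentations of $\Sym^d V$, matching by cardinality the $d+1$ distinct nonzero $\PGL(V)$-invariant subsheaves $K^i_d$. The main obstacle is the initial equivariant-descent step: verifying that $\PGL(V)$-invariant coherent subsheaves of a homogeneous equivariant sheaf on $\mathbb{P}(V)$ correspond bijectively to $P$-subrepresentations of a fixed fiber. Once this is in place, the remaining argument is a clean weight-theoretic calculation exploiting the irreducibility of $\Sym^{d-a} W$ as a $\GL(W)$-representation and the explicit $U$-action.
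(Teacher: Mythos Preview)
Your argument is correct and rests on the same core ingredients as the paper's sketch (which follows Huybrechts--Lehn): the action of the isotropy subgroup $P$ on the fiber, together with the irreducibility of each $\Sym^{d-a}W$ as a $\GL(W)$-module in characteristic~$0$. The organization differs. The paper first isolates Lemma~\ref{k1dNoInvariantSubsheaves} (that $K^1_d=\Sym^d\Omega_{\mathbb{P}(V)}(1)$ has no proper invariant subsheaves) and then uses the short exact sequences $0\to K^i_d\to K^{i+1}_d\to K^1_{d-i}(i)\to 0$ to conclude inductively that the $K^i_d$ exhaust the invariant subsheaves. You instead invoke the descent to $P$-representations at the outset and classify the $P$-subrepresentations of $\Sym^d V$ directly via the weight grading and the unipotent action. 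Your route is more streamlined and makes the representation theory fully explicit; the paper's is more modular, packaging the irreducibility as a standalone sheaf-theoretic lemma and leaving the descent correspondence implicit (it appears only inside the sketch of Lemma~\ref{k1dNoInvariantSubsheaves}). The one point you rightly flag and should firm up is the descent step: on a homogeneous variety a $G$-invariant coherent subsheaf of an equivariant vector bundle is automatically a subbundle, since the non-locally-free locus of the quotient is closed, $G$-invariant, and proper, hence empty; after that the bijection with $P$-subrepresentations of the fiber is standard.
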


The reason why Proposition \ref{onlyPGLVinvariantSubsheaves} holds is because of the following lemma. 

\begin{lemma}\cite[Lemma 1.4.3]{HuybrechtsLehn}\label{k1dNoInvariantSubsheaves}
$K^1_d = \Sym^d \Omega_{\mathbb{P}(V)}(1)$ has no proper $\PGL(V)$-invariant subsheaves. 
\end{lemma}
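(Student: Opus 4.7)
The plan is to exploit the fact that $\mathbb{P}(V)$ is a homogeneous space under $\PGL(V)$, so that a $\PGL(V)$-invariant coherent subsheaf of a $\PGL(V)$-equivariant vector bundle is controlled by the stabilizer's action on a single fiber. I would carry the argument out fiber-by-fiber together with Nakayama's lemma, rather than formally invoking an equivalence of categories between equivariant sheaves on $\mathbb{P}(V)$ and representations of the stabilizer.

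The first step is to identify the fiber of $K^1_d = \Sym^d \Omega_{\mathbb{P}(V)}(1)$ at a closed point $[L] \in \mathbb{P}(V)$. Restricting the short exact sequence
\[
0 \to \Omega_{\mathbb{P}(V)}(1) \to V \otimes \mathcal{O}_{\mathbb{P}(V)} \to \mathcal{O}_{\mathbb{P}(V)}(1) \to 0
\]
to the fiber at $[L]$ identifies $\Omega_{\mathbb{P}(V)}(1)|_{[L]}$ with the $n$-dimensional $P_L$-module $U_{[L]}$ appearing as the kernel of the fiberwise evaluation $V \to \mathcal{O}_{\mathbb{P}(V)}(1)|_{[L]}$, so $K^1_d|_{[L]} \cong \Sym^d U_{[L]}$. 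The stabilizer $P_L \subset \PGL(V)$ of $[L]$ surjects (up to central scalars in $\GL(V)$) onto $\GL(U_{[L]})$, and $\Sym^d$ of the standard representation of $\GL_n$ in characteristic $0$ is irreducible, so $\Sym^d U_{[L]}$ is an irreducible $P_L$-representation, with only $0$ and the full fiber as $P_L$-invariant subspaces.

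Now I would suppose that $F \subseteq K^1_d$ is a nonzero $\PGL(V)$-invariant subsheaf and let $W_{[L]}$ denote the image of the fiberwise evaluation $F \otimes k([L]) \to K^1_d|_{[L]}$ at each closed point $[L]$. Since $F \hookrightarrow K^1_d$ is $\PGL(V)$-equivariant, $W_{[L]}$ is a $P_L$-invariant subspace of $K^1_d|_{[L]}$, and the previous paragraph forces $W_{[L]} \in \{0, K^1_d|_{[L]}\}$. By transitivity of the $\PGL(V)$-action on $\mathbb{P}(V)$, this dichotomy is uniform in $[L]$. If $W_{[L]} = 0$ at every closed point, then Nakayama's lemma forces $F_{[L]} = 0$ at every closed stalk, whence $F = 0$ since the support of a coherent sheaf is closed; this contradicts nontriviality. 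Thus $W_{[L]} = K^1_d|_{[L]}$ for every $[L]$, and applying Nakayama's lemma to the quotient $(K^1_d)_{[L]}/F_{[L]}$ forces $F_{[L]} = (K^1_d)_{[L]}$ everywhere, giving $F = K^1_d$.

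The main obstacle I anticipate is the bookkeeping around the convention for $\mathbb{P}(V)$ and its Euler sequence that is needed to make the $P_L$-equivariance of the fiber identification transparent and to confirm that $U_{[L]}$ is the standard $\GL(V/L)$-module (and not, say, a dual or twist thereof); once the fiber is canonically written as $\Sym^d U_{[L]}$ for the natural $n$-dimensional $P_L$-module $U_{[L]}$, the representation-theoretic irreducibility is classical and the Nakayama plus transitivity endgame is routine.
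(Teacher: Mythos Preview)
Your approach matches the paper's sketch (which in turn follows Huybrechts--Lehn): the isotropy subgroup at a point acts irreducibly on the fiber $\Sym^d U_{[L]}$, and transitivity then forces any invariant subsheaf to be $0$ or all of $K^1_d$. One small slip to fix: in the case $W_{[L]}=0$ for every closed $[L]$, Nakayama does not directly give $F_{[L]}=0$, because $W_{[L]}=0$ only says $F_{[L]}\subset\mathfrak{m}_{[L]}(K^1_d)_{[L]}$, not $F_{[L]}=\mathfrak{m}_{[L]}F_{[L]}$; instead use that $F$, as a subsheaf of a vector bundle on an integral variety, is torsion-free, so $F_\eta=0$ at the generic point (which follows since its image in $(K^1_d)_\eta$ has dimension $\le \dim W_{[L]}=0$ by semicontinuity) forces $F=0$.
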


The idea behind Lemma \ref{k1dNoInvariantSubsheaves} is that the isotropy subgroup of $\PGL(V)$ that fixes a point $x \in \mathbb{P}(V)$ acts on the fiber of $\Sym^d \Omega_{\mathbb{P}(V)}(1)$ over $x$. A subgroup of this isotropy subgroup induces an irreducible representation of this fiber of $\Sym^d \Omega_{\mathbb{P}(V)}(1)$. Combining this irreducibility with the transitivity of the $\PGL(V)$ action on $\mathbb{P}(V)$ implies that any proper $\PGL(V)$-invariant subsheaf of $K^1_d$ is actually trivial. 
\par
Now note there exists a short exact sequence 
\[
0 \to K^i_d \to K^{i+1}_d \to K^{1}_{d-i}(i) \to 0,
\]
for $0 < i < i+1 \leq d+1$, since $\delta^{i+1}_d = \delta^{1}_{d-i}(i) \circ \delta^i_d$. Thus, successive terms $K^i_d$ and $K^{i+1}_d$ are related by $K^1_{d-i}(i)$. Combining this relation between successive terms with Lemma \ref{k1dNoInvariantSubsheaves} implies Proposition \ref{onlyPGLVinvariantSubsheaves}.
\par
By \cite[Lemma 1.4.2]{HuybrechtsLehn}, the slopes of the $K^i_d$ are such that
\[
\mu(K^1_d) < \cdots < \mu(K^d_d) < \mu(K^{d+1}_d) = 0,
\]
and combining this slope calculation with Lemma \ref{onlyPGLVinvariantSubsheaves} yields the following. 

\begin{lemma}\cite[Lemma 1.4.5]{HuybrechtsLehn}
\label{usefulExample}
Each $K^i_d$ is Gieseker semistable, where $1 \leq i \leq d+1$.  
\end{lemma}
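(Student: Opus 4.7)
The plan is to pin down the Harder-Narasimhan filtration of $K^i_d$ using the $\PGL(V)$-equivariance of its construction together with the classification of invariant subsheaves in Proposition \ref{onlyPGLVinvariantSubsheaves}. First, I would note that $K^i_d$, being the kernel of a surjective map of locally free sheaves on the smooth variety $\mathbb{P}(V)$, is itself locally free, and in particular pure of dimension $n = \dim \mathbb{P}(V)$. It therefore admits a maximal destabilizing subsheaf $F \subseteq K^i_d$.

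Next, I would argue that $F$ is $\PGL(V)$-invariant. Every object entering the definition of $K^i_d$ -- namely $\Omega_{\mathbb{P}(V)}$, the line bundles $\mathcal{O}_{\mathbb{P}(V)}(t)$, and the symmetrized Euler sequences -- is canonically $\PGL(V)$-equivariant, so $K^i_d$ is a $\PGL(V)$-invariant subsheaf of $\Sym^d V \otimes \mathcal{O}_{\mathbb{P}(V)}$. For any $\sigma \in \PGL(V)$, the pullback $\sigma^* F$ is again a maximal destabilizing subsheaf of $K^i_d$, so uniqueness of the HN filtration forces $\sigma^* F = F$. Applying Proposition \ref{onlyPGLVinvariantSubsheaves} to $F$ viewed as an invariant subsheaf of $\Sym^d V \otimes \mathcal{O}_{\mathbb{P}(V)}$ gives $F = K^j_d$ for some $1 \leq j \leq d+1$, and the chain $K^1_d \subseteq K^2_d \subseteq \cdots$ together with $F \subseteq K^i_d$ forces $j \leq i$.

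Finally, I would rule out $j < i$ using the slope ordering $\mu(K^1_d) < \cdots < \mu(K^d_d) < \mu(K^{d+1}_d) = 0$ recalled just above the statement. If $j < i$, this gives $\mu(K^j_d) < \mu(K^i_d)$. Since both sheaves are pure of the same dimension $n$, their reduced Hilbert polynomials share a leading coefficient $1/n!$ and differ in the next coefficient by a positive multiple of the slope difference, so $\rho(K^j_d) < \rho(K^i_d)$. This contradicts $F$ being a maximal destabilizing subsheaf. Hence $F = K^i_d$, which is exactly the condition that every subsheaf $G \subseteq K^i_d$ satisfies $\rho(G) \leq \rho(K^i_d)$, i.e., $K^i_d$ is Gieseker semistable.

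The heavy lifting is carried out by the already-cited ingredients: the classification of $\PGL(V)$-invariant subsheaves in Proposition \ref{onlyPGLVinvariantSubsheaves} and the slope computation from \cite{HuybrechtsLehn}. The only genuinely new input is the elementary translation between strict slope inequality and strict inequality of reduced Hilbert polynomials for pure sheaves of the same dimension, so I do not anticipate a serious obstacle in writing out the argument.
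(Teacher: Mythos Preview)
Your proposal is correct and follows exactly the approach the paper sketches: the maximal destabilizing subsheaf of $K^i_d$ is $\PGL(V)$-invariant by uniqueness of the Harder--Narasimhan filtration, hence equals some $K^j_d$ with $j\le i$ by Proposition~\ref{onlyPGLVinvariantSubsheaves}, and the strict slope ordering $\mu(K^1_d)<\cdots<\mu(K^{d+1}_d)$ then forces $j=i$. Your added remark translating a strict slope inequality into a strict inequality of reduced Hilbert polynomials is the only detail not made explicit in the paper's sketch, and it is straightforward.
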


\section{Proving Veronese normal bundles are slope semistable}\label{mainSection}

In this section, we prove our main result:

\begin{theorem}\label{theorem:veroneseSemistability}
Let $v_{n,d}: \mathbb{P}(V) \to \mathbb{P}(\Sym^d V)$ be a degree $d$ Veronese embedding, where $V$ is a $k$-vector space of dimension $n+1$. Let $X$ denote the Veronese variety. The normal bundle $\mathcal{N}_{X/\mathbb{P}(\Sym^d V)}$ is slope semistable.
\end{theorem}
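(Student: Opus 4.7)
The plan is to realize $\mathcal{N}_{X/\mathbb{P}(\Sym^d V)}$, after dualizing and twisting by a line bundle, as the bundle $K^{d-1}_d$ of Section~\ref{usefulExampleSection} (built on the vector space $V^*$ in place of $V$), and then to deduce slope semistability from Lemma~\ref{usefulExample} together with the preservation of slope semistability under duality and line-bundle twists.

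Concretely, I would first pull the Euler sequence on $\mathbb{P}(\Sym^d V)$ back along $v_{n,d}$ (using $v_{n,d}^* \mathcal{O}(1) = \mathcal{O}_{\mathbb{P}(V)}(d)$) and compare it with the Euler sequence on $\mathbb{P}(V)$ via the natural $\PGL(V)$-equivariant morphism $\phi \colon V \otimes \mathcal{O}_{\mathbb{P}(V)}(1) \to \Sym^d V \otimes \mathcal{O}_{\mathbb{P}(V)}(d)$ given fiberwise by multiplication with an appropriate power of the tautological section (at the fiber over $[L]$, $v \otimes \alpha \mapsto v \ell^{d-1} \otimes \alpha \cdot (\ell^{d-1})^*$). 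This gives a commutative diagram of short exact sequences with identity on the leftmost $\mathcal{O}$ and with the Veronese differential $dv_{n,d}$ on the rightmost column. Applying the snake lemma yields
\[
0 \to V \otimes \mathcal{O}_{\mathbb{P}(V)}(1) \xrightarrow{\phi} \Sym^d V \otimes \mathcal{O}_{\mathbb{P}(V)}(d) \to \mathcal{N}_{X/\mathbb{P}(\Sym^d V)} \to 0,
\]
the injectivity of $\phi$ following from the injectivity of multiplication by a nonzero power of a linear form. Twisting by $\mathcal{O}_{\mathbb{P}(V)}(-d)$ and dualizing then produces
\[
0 \to \mathcal{N}_{X/\mathbb{P}(\Sym^d V)}^{\vee}(d) \to \Sym^d V^* \otimes \mathcal{O}_{\mathbb{P}(V)} \to V^* \otimes \mathcal{O}_{\mathbb{P}(V)}(d-1) \to 0.
\]
Matching the right-hand surjection with the map $\delta^{d-1}_d$ of Section~\ref{usefulExampleSection}, applied now to $V^*$, identifies $\mathcal{N}^{\vee}(d)$ with the corresponding $K^{d-1}_d$.

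With this identification, Lemma~\ref{usefulExample} gives that $\mathcal{N}^{\vee}(d)$ is Gieseker semistable, which by Proposition~\ref{giesekerSlopeRelationship} makes it slope semistable. Since $\mathbb{P}(V)$ is a normal integral projective variety, Lemma~\ref{slopeSemistableDualPreserved} implies $\mathcal{N}(-d)$ is slope semistable, and tensoring with the line bundle $\mathcal{O}_{\mathbb{P}(V)}(d)$ preserves this property, yielding slope semistability of $\mathcal{N}_{X/\mathbb{P}(\Sym^d V)}$.

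I expect the main obstacle to be the precise identification of the dualized surjection $\Sym^d V^* \otimes \mathcal{O} \to V^* \otimes \mathcal{O}(d-1)$ with $\delta^{d-1}_d$: both are natural, $\PGL(V)$-equivariant maps encoding an order $(d-1)$ evaluation at points of $\mathbb{P}(V)$, but matching them exactly requires careful fiberwise bookkeeping. The appearance of $V^*$ instead of $V$ does not cause trouble, because the proof of Lemma~\ref{usefulExample} applies verbatim to any $(n+1)$-dimensional vector space; as a sanity check one can verify in the case $n=1$, $d=2$ that both constructions recover $\mathcal{O}_{\mathbb{P}^1}(-2) = K^1_2$.
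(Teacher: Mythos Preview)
Your proposal is correct and follows essentially the same route as the paper: derive the short exact sequence $0 \to V \otimes \mathcal{O}(1) \to \Sym^d V \otimes \mathcal{O}(d) \to \mathcal{N} \to 0$ by comparing the two Euler sequences, twist and dualize, identify the resulting kernel with $K^{d-1}_d$ via the map $\delta^{d-1}_d$, and then invoke Lemma~\ref{usefulExample}, Proposition~\ref{giesekerSlopeRelationship}, Lemma~\ref{slopeSemistableDualPreserved}, and Lemma~\ref{slopeSemistablePreservedTensor}. The step you flag as the main obstacle---matching the dualized surjection with $\delta^{d-1}_d$---is exactly what the paper treats carefully in Proposition~\ref{dualCompositions} and Lemma~\ref{dualSymmetrizingCommute}, and the paper sidesteps your $V$ versus $V^*$ issue by fixing a basis and using the resulting self-dual identification.
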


Since all Veronese varieties of dimension $n$ and degree $d$ are projectively equivalent, it suffices to prove Theorem \ref{theorem:veroneseSemistability} with respect to a particular embedding. Recall that we fix our embedding to be the $d$-th power embedding, so that $v_{n,d}([L]) = [L^d]$ for one-dimensional subspaces $L \subset V$. This choice of embedding simplifies our proof. To begin, we first show that $\mathcal{N}_{X/\mathbb{P}(\Sym^d V)} \otimes \mathcal{O}_{\mathbb{P}(V)}(-d)$ is the quotient of certain locally free sheaves.

\begin{lemma}\label{normalTwistSES}
The locally free sheaf $\mathcal{N}_{X/\mathbb{P}(\Sym^d V)} \otimes \mathcal{O}_{\mathbb{P}(V)}(-d)$ is expressed as a quotient by the short exact sequence
\[
0 \to V \otimes \mathcal{O}_{\mathbb{P}(V)}(1-d) \xrightarrow[]{\Theta}  \Sym^d V \otimes \mathcal{O}_{\mathbb{P}(V)} \to \mathcal{N}_{X/\mathbb{P}(\Sym^d V)} \otimes \mathcal{O}_{\mathbb{P}(V)}(-d) \to 0.
\]
With respect to the $d$-th power embedding, the map $\Theta$ can be described in the following way: let $\{ X_i \}_{i=0}^n$ be a vector space basis of $V$ and let $\{ Z_i \}_{i=0}^n$ denote the coordinate system on $V$ which is dual to $\{ X_i \}_{i=0}^n$, so that $Z_i(X_j) = \delta_{ij}$. Similarly, let $\{ Y_{i_1 \cdots i_d } \}_{0 \leq i_1 \leq \cdots \leq i_d \leq n}$ denote the coordinate system on $\Sym^d V$ dual to the basis $\{ X_{i_1} \otimes \cdots \otimes X_{i_d} \}_{0 \leq i_1 \leq \cdots \leq i_d \leq n}$. Making the natural identifications $V \cong k \langle \frac{\partial}{\partial X_0}, \cdots, \frac{\partial}{\partial X_n} \rangle$ and $\Sym^d V \cong k \langle \{ \frac{\partial}{\partial X_{i_1}} \otimes \cdots \otimes \frac{\partial}{\partial X_{i_d}} \} \rangle$, the map $\Theta$ is given by tensoring by $(\sum_{i=0}^n Z_i \frac{\partial}{\partial X_i})^{\otimes (d-1)}$.

\end{lemma}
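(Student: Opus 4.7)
The plan is to combine the normal bundle exact sequence of $v_{n,d}$ with the Euler sequences on $\mathbb{P}(V)$ and $\mathbb{P}(\Sym^d V)$. Because $v_{n,d}^*\mathcal{O}_{\mathbb{P}(\Sym^d V)}(1) = \mathcal{O}_{\mathbb{P}(V)}(d)$, pulling back the Euler sequence of $\mathbb{P}(\Sym^d V)$ to $X$ yields
\[
0 \to \mathcal{O}_{\mathbb{P}(V)} \to \Sym^d V \otimes \mathcal{O}_{\mathbb{P}(V)}(d) \to v_{n,d}^* T_{\mathbb{P}(\Sym^d V)} \to 0.
\]
Placing this alongside the Euler sequence of $\mathbb{P}(V)$ and the sequence $0 \to T_{\mathbb{P}(V)} \to v_{n,d}^*T_{\mathbb{P}(\Sym^d V)} \to \mathcal{N}_{X/\mathbb{P}(\Sym^d V)} \to 0$ produces a commutative $3 \times 3$ diagram of short exact sequences in which both left-most terms are $\mathcal{O}_{\mathbb{P}(V)}$ and the two Euler inclusions agree (both send $1$ to the radial section). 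Since the leftmost vertical map is the identity, the snake lemma identifies the kernel and cokernel of the induced middle map $V \otimes \mathcal{O}_{\mathbb{P}(V)}(1) \to \Sym^d V \otimes \mathcal{O}_{\mathbb{P}(V)}(d)$ with those of $T_{\mathbb{P}(V)} \to v_{n,d}^* T_{\mathbb{P}(\Sym^d V)}$; injectivity of $T_{\mathbb{P}(V)}\to v_{n,d}^* T_{\mathbb{P}(\Sym^d V)}$ (because $v_{n,d}$ is a closed immersion) then gives a short exact sequence with cokernel $\mathcal{N}_{X/\mathbb{P}(\Sym^d V)}$. Twisting by $\mathcal{O}_{\mathbb{P}(V)}(-d)$ produces the sequence in the statement.

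To pin down $\Theta$, I would compute the derivative of $v_{n,d}$ in coordinates. At a point $P \in V\setminus\{0\}$, differentiation of the polynomial map $P \mapsto P^d$ inside $\Sym^\bullet V$ gives $\partial_{X_i}(P^d) = d\,P^{d-1}X_i$, so the derivative of $v_{n,d}$ at $[P]$ is, modulo the radial directions, the linear map $v \mapsto d\,P^{d-1}v$ from $V$ to $\Sym^d V$. Globally, the tautological inclusion $\mathcal{O}_{\mathbb{P}(V)}(-1) \hookrightarrow V\otimes \mathcal{O}_{\mathbb{P}(V)}$ corresponds, after twisting by $\mathcal{O}_{\mathbb{P}(V)}(1)$, to the canonical section $\sum_i Z_i \otimes \partial/\partial X_i$ of $V\otimes \mathcal{O}_{\mathbb{P}(V)}(1)$, so its $(d-1)$-st symmetric tensor power is a global section of $\Sym^{d-1}V \otimes \mathcal{O}_{\mathbb{P}(V)}(d-1)$ whose value at $[P]$ is the class of $P^{d-1}$. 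Tensoring a local section of $V\otimes \mathcal{O}_{\mathbb{P}(V)}(1-d)$ with this section and multiplying inside $\Sym^{\bullet}V$ therefore reproduces, up to the harmless overall scalar $d$, the Euler-level representative of the derivative computed above; this is the map $\Theta$ of the statement.

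The main obstacle is the compatibility check in the last step: one must verify that the horizontal map on Euler middle terms produced by the snake-lemma construction really is (a scalar multiple of) the derivative of $v_{n,d}$, and not some other lift compatible with the commuting Euler sequences. This is forced by the fact that both Euler inclusions of $\mathcal{O}_{\mathbb{P}(V)}$ are the radial sections $\sum_i Z_i\otimes\partial/\partial X_i$ and $\sum_I Y_I\otimes \partial/\partial X_I$ respectively, which themselves are compatible under the $d$-th power map $X_i \mapsto X_i^d$; once this compatibility is unwound, the fiberwise identification of $\Theta$ with the formula $(\sum_i Z_i\,\partial/\partial X_i)^{\otimes(d-1)}$ is routine, and the lemma follows.
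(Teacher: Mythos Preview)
Your argument is correct and follows essentially the same route as the paper: combine the two Euler sequences with the normal-bundle sequence into a commutative diagram whose left column is the identity on $\mathcal{O}_{\mathbb{P}(V)}$, read off the resulting short exact sequence, and twist by $\mathcal{O}_{\mathbb{P}(V)}(-d)$. The only cosmetic difference is in the identification of $\Theta$: the paper verifies directly that the pullback $v_{n,d}^*\beta$ of the Euler section on $\mathbb{P}(\Sym^d V)$ equals $\Theta'\circ\alpha$ with $\Theta'$ the $(d-1)$-st tensor power of the tautological section, whereas you reach the same conclusion by computing the derivative of $P\mapsto P^{d}$ fiberwise (picking up and correctly dismissing the scalar $d$).
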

\begin{proof}
Consider the Euler exact sequences 
\begin{equation}\label{Euler1}
0 \to \mathcal{O}_{\mathbb{P}(V)} \xrightarrow[]{\alpha}  V \otimes \mathcal{O}_{\mathbb{P}(V)}(1) \to T_{\mathbb{P}(V)} \to 0 \text{ } \text{ } \text{      and     }  
\end{equation}
\begin{equation} \label{Euler2}
0 \to \mathcal{O}_{\mathbb{P}(\Sym^d V)} \xrightarrow[]{\beta} \Sym^d V \otimes \mathcal{O}_{\mathbb{P}(\Sym^d V)}(1) \to T_{\mathbb{P}(\Sym^d V)} \to 0. 
\end{equation}
Pulling back short exact sequence \ref{Euler2} along the $d$-th power embedding $v_{n,d}$ yields the short exact sequence 
\begin{equation} \label{Euler2PulledBack}
0 \to \mathcal{O}_{\mathbb{P}(V)} \to \Sym^d V \otimes \mathcal{O}_{\mathbb{P}(V)}(d) \to T_{\mathbb{P}(\Sym^d V)}|_{X} \to 0.
\end{equation}
We can combine short exact sequences \ref{Euler1} and \ref{Euler2PulledBack} with the short exact sequence 
\[
0 \to T_{\mathbb{P}(V)} \to T_{\mathbb{P}(\Sym^d V)}|_{X} \to \mathcal{N}_{X/\mathbb{P}(\Sym^d V)} \to 0
\]
to obtain the short exact sequence 
\begin{equation}\label{normalSES}
    0 \to V \otimes \mathcal{O}_{\mathbb{P}(V)}(1) \xrightarrow[]{\Theta'} \Sym^d V \otimes \mathcal{O}_{\mathbb{P}(V)}(d) \to \mathcal{N}_{X/\mathbb{P}(\Sym^d V)} \to 0.
\end{equation}
To be more clear on how short exact sequence \ref{normalSES} is constructed, let us be more precise about what the map $\Theta'$ is. First, recall how the map $\alpha$ in short exact sequence $\ref{Euler1}$ is typically defined. If $\{X_i \}_{i=0}^n$ of $V$ is a basis, then we can naturally identify $V$ with $k\langle \frac{\partial}{\partial X_0}, \cdots, \frac{\partial}{\partial X_n} \rangle$. Let $\{ Z_i \}_{i=0}^n$ denote the coordinate system on $V$ which is dual to the basis $\{ X_i \}_{i=0}^n$. Then $\alpha$ is given by the global section 
\[
\sum_{i=0}^n Z_i \frac{\partial}{\partial X_i} \in H^0(\mathbb{P}(V), V \otimes \mathcal{O}_{\mathbb{P}(V)}(1)).
\]
\par
Similarly, let $\{ Y_{i_1 \cdots i_d } \}_{0 \leq i_1 \leq \cdots \leq i_d \leq n}$ denote the coordinate system on $\Sym^d V$ which is dual to the basis $\{ \frac{\partial}{\partial X_{i_1}} \otimes \cdots \otimes \frac{\partial}{\partial X_{i_d}} \}_{0 \leq i_1 \leq \cdots \leq i_d \leq n}$. Then the map $\beta$ in short exact sequence \ref{Euler2} is given by the global section 
\[
\sum_{0 \leq i_1 \leq \cdots \leq i_d \leq n} Y_{i_1 \cdots i_d} [ \frac{\partial}{\partial X_{i_1}} \otimes \cdots \otimes \frac{\partial}{\partial X_{i_d} }]\in H^0(\mathbb{P}(\Sym^d V), \Sym^d V \otimes \mathcal{O}_{\mathbb{P}(\Sym^d V)}(1)).
\]
Now if we pull the map $\beta$ back along the $d$-th power embedding $v_{n,d}$, we see that there is a commutative diagram
\[\begin{tikzcd}
	{\mathcal{O}_{\mathbb{P}(V)}} & {V \otimes \mathcal{O}_{\mathbb{P}(V)}(1)} \\
	{\mathcal{O}_{\mathbb{P}(V)}} & {\Sym^d V \otimes \mathcal{O}_{\mathbb{P}(V)}(d)}
	\arrow[from=1-1, to=1-2]
	\arrow["Id"', from=1-1, to=2-1]
	\arrow["\Theta'"', from=1-2, to=2-2]
	\arrow[from=2-1, to=2-2]
\end{tikzcd}\]
where $\Theta'$ is given by tensoring by $(\sum_{i=0}^n Z_i \frac{\partial}{\partial X_i})^{\otimes (d-1)}$. 
\par
Finally, we can twist down short exact sequence \ref{normalSES} by $\mathcal{O}_{\mathbb{P}(V)}(-d)$ to obtain the desired short exact sequence
\[
0 \to V \otimes \mathcal{O}_{\mathbb{P}(V)}(1-d) \xrightarrow[]{\Theta} \Sym^d V \otimes \mathcal{O}_{\mathbb{P}(V)} \to \mathcal{N}_{X/\mathbb{P}(\Sym^d V)} \otimes \mathcal{O}_{\mathbb{P}(V)}(-d) \to 0.
\]
In particular, the map $\Theta$ is also given by tensoring by $(\sum_{i=0}^n Z_i \frac{\partial}{\partial X_i})^{\otimes (d-1)}$. 

\end{proof}

In general, if we tensor the Euler exact sequence 
\[
0 \to \mathcal{O}_{\mathbb{P}(V)} \to V \otimes \mathcal{O}_{\mathbb{P}(V)}(1) \to  T_{\mathbb{P}(V)} \to 0
\]
on $\mathbb{P}(V)$ by $\mathcal{O}_{\mathbb{P}(V)}(-1)$ to obtain the short exact sequence 
\begin{equation}\label{EulerPV-1}
    0 \to \mathcal{O}_{\mathbb{P}(V)}(-1) \to V \otimes \mathcal{O}_{\mathbb{P}(V)} \xrightarrow[]{\eta}  T_{\mathbb{P}(V)}(-1) \to 0,
\end{equation}
we can symmetrize short exact sequence \ref{EulerPV-1} to the $i$-th degree with respect to $\eta$ to obtain the short exact sequence 
\[
0 \to \Sym^{i-1} V \otimes \mathcal{O}_{\mathbb{P}(V)}(-1) \to \Sym^i V \otimes \mathcal{O}_{\mathbb{P}(V)} \to \Sym^i[T_{\mathbb{P}(V)}(-1)] \to 0.
\]
Tensoring by $\mathcal{O}_{\mathbb{P}(V)}(i-d)$ yields
\[
0 \to \Sym^{i-1} V \otimes \mathcal{O}_{\mathbb{P}(V)}(i-d-1) \xrightarrow[]{\phi_i} \Sym^i V \otimes \mathcal{O}_{\mathbb{P}(V)}(i-d) \to \Sym^i[T_{\mathbb{P}(V)}(-1)] \otimes \mathcal{O}_{\mathbb{P}(V)}(i-d) \to 0.
\]
Then we claim the following about these maps $\phi_i$.

\begin{proposition}\label{dualCompositions}
    For $1 \leq i \leq d$, dualizing the composition of maps 
\[
\Sym^{d-i}V \otimes \mathcal{O}_{\mathbb{P}(V)}(-i) \xrightarrow[]{\phi_{d-i+1}} \cdots \xrightarrow[]{\phi_d} \Sym^d V \otimes \mathcal{O}_{\mathbb{P}(V)} 
\]
gives exactly the composition of maps 
\[
\Sym^d V \otimes \mathcal{O}_{\mathbb{P}(V)} \xrightarrow[]{\xi_d}   \cdots \xrightarrow[]{\xi_{d-i+1}} \Sym^{d-i} V \otimes \mathcal{O}_{\mathbb{P}(V)}(i)
\]
defining the map $\delta^i_d$ from Subsection \ref{usefulExampleSection}.
\end{proposition}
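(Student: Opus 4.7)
The plan is to show that each $\phi_i^{\ast}$ equals $\xi_i$ individually under the natural identifications, so that dualizing the full composition and reversing the order gives precisely the composition defining $\delta^i_d$.

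First, I would compare the two short exact sequences underlying the two maps. The map $\phi_i$ arises, before the twist by $\mathcal{O}_{\mathbb{P}(V)}(i-d)$, from the sequence
\[
0 \to \Sym^{i-1} V \otimes \mathcal{O}_{\mathbb{P}(V)}(-1) \to \Sym^i V \otimes \mathcal{O}_{\mathbb{P}(V)} \to \Sym^i[T_{\mathbb{P}(V)}(-1)] \to 0,
\]
obtained as the $i$-th symmetric power of the twisted Euler sequence. Meanwhile, $\xi_i$ arises, before the twist by $\mathcal{O}_{\mathbb{P}(V)}(d-i)$, from
\[
0 \to \Sym^i \Omega_{\mathbb{P}(V)}(1) \to \Sym^i V \otimes \mathcal{O}_{\mathbb{P}(V)} \to \Sym^{i-1} V \otimes \mathcal{O}_{\mathbb{P}(V)}(1) \to 0,
\]
the $i$-th symmetric power of the dual twisted Euler sequence. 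Since we are in characteristic $0$, symmetric powers commute with dualization, so $(\Sym^i T_{\mathbb{P}(V)}(-1))^{\ast} \cong \Sym^i \Omega_{\mathbb{P}(V)}(1)$; combining this with the identification $\Sym^i V^{\ast} \cong \Sym^i V$ coming from the chosen basis $\{X_i\}$, one sees that dualizing the first displayed sequence reproduces the second.

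Next, dualization converts $\otimes \mathcal{O}_{\mathbb{P}(V)}(i-d)$ into $\otimes \mathcal{O}_{\mathbb{P}(V)}(d-i)$, so the twisted sequence defining $\phi_i$ dualizes to the twisted sequence defining $\xi_i$, and in particular the injection $\phi_i$ dualizes to the surjection $\xi_i$ for each $i$. Applying the standard identity $(f_k \circ \cdots \circ f_1)^{\ast} = f_1^{\ast} \circ \cdots \circ f_k^{\ast}$ to $\phi_d \circ \phi_{d-1} \circ \cdots \circ \phi_{d-i+1}$ then gives $\xi_{d-i+1} \circ \xi_{d-i+2} \circ \cdots \circ \xi_d$, which is exactly the composition defining $\delta^i_d$.

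The main obstacle is bookkeeping. One must verify that the two symmetric-power sequences are set up so that their duality is indeed the natural one induced by the Euler/dual-Euler duality, and that the tensor twists together with the identification $V \cong V^{\ast}$ from the fixed basis are applied consistently on both sides. Once these identifications are made explicit, the proposition follows immediately from the fact that dualization and symmetric powers commute and that duals of compositions reverse order.
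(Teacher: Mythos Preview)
Your approach is correct and coincides with the paper's: both reduce to showing $\phi_j^{\ast}=\xi_j$ for each individual $j$ and then use that dualizing a composition reverses the order. What you call ``bookkeeping'' is precisely what the paper isolates as a separate lemma, verifying by an explicit computation on pure tensors that, for a short exact sequence of free modules, symmetrizing to the $i$-th degree and then dualizing yields the same sequence (with the same maps) as first dualizing and then symmetrizing.
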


To prove Proposition \ref{dualCompositions}, it is enough to show that dualizing $\phi_{d-j}$ gives exactly the map 
\[
\xi_{d-j}: \Sym^{d-j}V \otimes \mathcal{O}_{\mathbb{P}(V)}(j) \to \Sym^{d-j-1} V \otimes \mathcal{O}_{\mathbb{P}(V)}(j+1)
\]
defined in Subsection \ref{usefulExampleSection}. Recall that $\xi_{d-j}$ is defined by also starting with short exact sequence \ref{EulerPV-1}:
\[
0 \to \mathcal{O}_{\mathbb{P}(V)}(-1) \to V \otimes \mathcal{O}_{\mathbb{P}(V)} \xrightarrow[]{\eta}  T_{\mathbb{P}(V)}(-1) \to 0.
\]
However, instead of immediately symmetrizing with respect to $\eta$, we dualize first to obtain 
\[
0 \to \Omega_{\mathbb{P}(V)}(1) \xrightarrow[]{\eta^*}  V \otimes \mathcal{O}_{\mathbb{P}(V)} \to \mathcal{O}_{\mathbb{P}(V)}(1) \to 0,
\]
and then symmetrize the short exact sequence to degree $(d-j)$ with respect to $\eta^*$. Finally, we tensor by $\mathcal{O}_{\mathbb{P}(V)}(j)$ and consider the quotient map of the resulting short exact sequence. Working locally, we see that it suffices to show that the following holds.

\begin{lemma}\label{dualSymmetrizingCommute}
	Let 
	\[
	0 \to M \xrightarrow[]{\phi} N \xrightarrow[]{\psi} P \to 0
	\]
	be a short exact sequence of free $A$-modules. There are two equivalent ways to obtain the short exact sequence
	\[
	0 \to \Sym^i P^* \to \Sym^i N^* \to \Sym^{i-1} N^* \otimes M^* \to 0: 
	\]
	\begin{enumerate}
		\item by first symmetrizing to the $i$-th degree with respect to $\psi$ to obtain 
		\[
		0 \to \Sym^{i-1}N \otimes M \to \Sym^i N \to \Sym^i P \to 0
		\]
		and then dualizing to obtain
		\[
		0 \to \Sym^i P^* \to \Sym^i N^* \to \Sym^{i-1} N^* \otimes M^* \to 0,
		\]
		\item or by first dualizing to obtain 
		\[
		0 \to P^* \xrightarrow[]{\psi^*} N^* \xrightarrow[]{\phi^*} M^* \to 0
		\]
		and then symmetrizing to the $i$-th degree with respect to $\psi^*$ to obtain 
		\[
		0 \to \Sym^i P^* \to \Sym^i N^* \to \Sym^{i-1}N^* \otimes M^* \to 0.
		\]
	\end{enumerate}
\end{lemma}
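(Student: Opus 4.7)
The plan is to reduce to a split SES by using that $P$ is free and hence projective, so we may fix a splitting $N \cong M \oplus P$; dually, $N^* \cong M^* \oplus P^*$, with $\psi^* \colon P^* \hookrightarrow N^*$ equal to inclusion of the first summand. The $i$-th symmetric power then decomposes as $\Sym^i N \cong \bigoplus_{a+b=i}\Sym^a M \otimes \Sym^b P$, and similarly for $\Sym^i N^*$. Over our characteristic $0$ base ring $A$ (so that $i!$ is invertible), there is a natural isomorphism $(\Sym^i N)^* \cong \Sym^i(N^*)$ compatible with duals of maps, in the sense that $(\Sym^i f)^* = \Sym^i(f^*)$ under this identification; this will be the conceptual crux.

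The key step is to identify, in both methods, the injection $\Sym^i P^* \hookrightarrow \Sym^i N^*$. In method (1), dualizing $\Sym^i \psi \colon \Sym^i N \to \Sym^i P$ (which is projection onto the $a=0$ summand under our splitting) produces $(\Sym^i \psi)^*$, the inclusion of the $a=0$ summand of $\Sym^i N^*$. In method (2), since $\psi^*$ is inclusion of the $P^*$-summand, $\Sym^i(\psi^*)$ is likewise inclusion of the $a=0$ summand. Under the natural identification $(\Sym^i \psi)^* = \Sym^i(\psi^*)$ from the previous paragraph, the two injections literally agree.

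Because a short exact sequence is determined up to unique isomorphism by its middle term and its injection, the two cokernels are canonically identified. It remains to show that both cokernels are identified with $\Sym^{i-1}N^* \otimes M^*$ via the same map: method (1) does this by dualizing the isomorphism $\Sym^{i-1}N \otimes M \xrightarrow{\sim} \ker(\Sym^i \psi)$ given by multiplication (which is injective because $M$ has rank $1$, the implicit hypothesis needed for the stated SES to make sense), while method (2) does this directly, using the splitting to identify $\mathrm{coker}(\Sym^i \psi^*) \cong \bigoplus_{a \geq 1}\Sym^a M^* \otimes \Sym^{i-a}P^*$ with $\Sym^{i-1}N^* \otimes M^*$. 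Both identifications come from the same splitting of $N$, so they agree, and the two SESs coincide.

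The main technical hurdle is verifying the naturality and compatibility of the isomorphism $(\Sym^i N)^* \cong \Sym^i(N^*)$ under dualization of maps. This is standard but genuinely requires $i!$ to be invertible in $A$, which is automatic since we work over a characteristic $0$ field; once this identification is secured, the rest of the argument amounts to unwinding the splitting on pure tensors.
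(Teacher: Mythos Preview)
Your approach via a splitting and the naturality $(\Sym^i f)^* = \Sym^i(f^*)$ is genuinely different from the paper's, which simply evaluates both maps on pure tensors using the pairing $\langle \ell_1\cdots\ell_i,\,v_1\cdots v_i\rangle=\frac{1}{i!}\sum_{\sigma}\prod_j\ell_{\sigma(j)}(v_j)$ and checks the formulas match. For the injection $\Sym^i P^*\hookrightarrow\Sym^i N^*$ your naturality argument is correct and cleaner than the paper's explicit check; it also explains conceptually why that check succeeds. You are also right to flag the implicit rank-$1$ hypothesis on $M$.

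The surjection step, however, has a real gap. In method~(2) the map $\Sym^i N^*\to\Sym^{i-1}N^*\otimes M^*$ is \emph{not} defined via a splitting: it is the canonical comultiplication $\ell_1\cdots\ell_i\mapsto\frac{1}{i}\sum_k(\ell_1\cdots\widehat{\ell_k}\cdots\ell_i)\otimes\ell_k$ followed by $\mathrm{id}\otimes\phi^*$, and this is splitting-independent. So your sentence ``both identifications come from the same splitting of $N$, so they agree'' is not a valid argument---neither identification is defined through the splitting. The fix is to extend the very naturality principle you used for the injection: under the isomorphism $(\Sym^i N)^*\cong\Sym^i(N^*)$, the dual of the multiplication $\Sym^{i-1}N\otimes N\to\Sym^i N$ is exactly that comultiplication, and precomposing with $M\hookrightarrow N$ on one side corresponds to postcomposing with $N^*\twoheadrightarrow M^*$ on the other. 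Once you state and verify this (it is the same flavor of identity as $(\Sym^i f)^*=\Sym^i(f^*)$, and the paper's pure-tensor computation is essentially a proof of it), the two surjections agree on the nose. This step is not optional for the intended application: the maps $\xi_{d-j}$ being matched are the \emph{quotient} maps, so equality of surjections is precisely what is needed.
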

\begin{proof}

Let us first verify that both procedures produce the same map 
\[
\Sym^i P^* \to \Sym^i N^*. 
\]
First, let us describe this map through the first procedure. The map $\Sym^i N \to \Sym^i P$ is given on pure tensors by
\[
e_1 \otimes \cdots \otimes e_i \mapsto \psi(e_1) \otimes \cdots \otimes \psi(e_i). 
\]
Then the map $\Sym^i P^* \to \Sym^i N^*$ is given by the composition 
\[
\Sym^i P^* \cong (\Sym^i P)^* \to (\Sym^i N)^* \cong \Sym^i N^*. 
\]
Now we describe the map $\Sym^i P^* \to \Sym^i N^*$ through the second procedure. First, $\psi^*$ maps $\ell \in P^*$ to $\psi^*(\ell) \in N^*$ such that for $n \in N$, we have $\psi^*(\ell)(n) = \ell(\psi(n))$. Then $\Sym^i P^* \to \Sym^i N^*$ is the map given on pure tensors by 
\[
\ell_1 \otimes \cdots \otimes \ell_i \mapsto \psi^*(\ell_1) \otimes \cdots \otimes \psi^*(\ell_i). 
\]
Now we check that the two maps from $\Sym^i P^*$ to $ \Sym^i N^*$ we have just described are actually the same. It suffices to check agreement on pure tensors. Let $\ell_1 \otimes \cdots \otimes \ell_i \in \Sym^i P^*$ and $v_1 \otimes \cdots \otimes v_i \in \Sym^i N$. The first procedure sends the pure tensor $\ell_1 \otimes \cdots \otimes \ell_i$ to the element of $\Sym^i N^*$ which evaluates pure tensors as such:
\[
v_1 \otimes \cdots \otimes v_i \mapsto \psi(v_1) \otimes \cdots \otimes \psi(v_i) \mapsto \frac{1}{i!} \sum_{\sigma \in S_i} \ell_{\sigma(1)}(\psi(v_1)) \cdots \ell_{\sigma(i)}(\psi(v_i)),
\]
where $S_i$ is the symmetric group on $i$ letters. The second procedure maps the pure tensor $\ell_1 \otimes \cdots \otimes \ell_i$ to the element $\psi^*(\ell_1) \otimes \cdots \otimes \psi^*(\ell_i)$ of $\Sym^i N^*$, which evaluates pure tensors as such: 
\[
v_1 \otimes \cdots \otimes v_i \mapsto \frac{1}{i!} \sum_{\sigma \in S_i} \ell_{\sigma(1)}(\psi(v_1)) \cdots \ell_{\sigma(i)}(\psi(v_i)).
\]
Thus, the two procedures produce the same map from $\Sym^i P^*$ to $\Sym^i N^*$.
\par
Now we verify that the two procedures produce the same quotient map $\Sym^i N^* \to \Sym^{i-1}N^* \otimes M^*$. It suffices to verify this on pure tensors. The first procedure maps the pure tensor  $\ell_1 \otimes \cdots \otimes \ell_i \in \Sym^i N^*$ to the element of $\Sym^{i-1}N^* \otimes M^*$ which evaluates elements $[a_1 \otimes \cdots \otimes a_{i-1}] \otimes b \in \Sym^{i-1}N \otimes M$ as such:
\[
[a_1 \otimes \cdots \otimes a_{i-1}] \otimes b  \mapsto a_1 \otimes \cdots \otimes a_{i-1} \otimes \phi(b) \mapsto \frac{1}{i!} \sum_{\sigma \in S_i} \ell_{\sigma(1)}(a_1) \cdots \ell_{\sigma(i-1)}(a_{i-1}) \ell_{\sigma(i)}(\phi(b)).
\]
On the other hand, the second procedure maps the pure tensor $\ell_1 \otimes \cdots \otimes \ell_i$ to the element 
\[
\frac{1}{i} \sum_{k=1}^i [\ell_1 \otimes \cdots \otimes \widehat{\ell_k} \otimes \cdots \otimes \ell_i] \otimes \phi^* (\ell_k) \in \Sym^{i-1} N^* \otimes M^*,
\]
which evaluates elements $[a_1 \otimes \cdots \otimes a_{i-1}] \otimes b \in \Sym^{i-1} N \otimes M$ as such: 
\[
[a_1 \otimes \cdots \otimes a_{i-1}]  \otimes b \mapsto \frac{1}{i}[ \frac{\ell_1(\phi(b))}{(i-1)!}  \sum_{\sigma \in S_{i-1} } \ell_{\sigma(2)}(a_1) \cdots \ell_{\sigma(i)}(a_{i-1})  + \cdots
\]
\[
 + \frac{\ell_i(\phi(b))}{(i-1)!}  \sum_{\sigma \in S_{i-1}} \ell_{\sigma(1)}(a_1) \cdots \ell_{\sigma(i-1)}(a_{i-1}) ]
\]
\[
=  \frac{1}{i!} \sum_{\sigma \in S_i} \ell_{\sigma(1)}(a_1) \cdots \ell_{\sigma(i-1)}(a_{i-1}) \ell_{\sigma(i)}(\phi(b)).
\]
Thus, the two procedures produce the same quotient map $\Sym^i N^* \to \Sym^{i-1} N^* \otimes M^*$.
\end{proof}
Hence, Lemma \ref{dualSymmetrizingCommute} implies that the dual of $\phi_{d-j}$ is $\xi_{d-j}$, which implies Proposition \ref{dualCompositions}. Now, using the same notation used in Lemma \ref{normalTwistSES}, note that each map 
\[
\phi_{d-j}: \Sym^{d-j-1}V \otimes \mathcal{O}_{\mathbb{P}(V)}(-j-1) \to \Sym^{d-j} V \otimes \mathcal{O}_{\mathbb{P}(V)}(-j)
\]
is given by tensoring by $\sum_{i=0}^n Z_i \frac{\partial}{\partial X_i}$. This implies that the composition of maps 
\[
V \otimes \mathcal{O}_{\mathbb{P}(V)}(1-d) \xrightarrow[]{\phi_2} \cdots \xrightarrow[]{\phi_d} \Sym^d V \otimes \mathcal{O}_{\mathbb{P}(V)}
\]
is given by tensoring by $(\sum_{i=0}^n Z_i \frac{\partial}{\partial X_i})^{\otimes (d-1)}$ and thus is exactly the map $\Theta$ in Lemma \ref{normalTwistSES}. This implies that if we dualize the short exact sequence 
\[
0 \to V \otimes \mathcal{O}_{\mathbb{P}(V)}(1-d) \xrightarrow[]{\Theta} \Sym^d V \otimes \mathcal{O}_{\mathbb{P}(V)} \to \mathcal{N}_{X/\mathbb{P}(\Sym^d V)} \otimes \mathcal{O}_{\mathbb{P}(V)}(-d) \to 0
\]
from Lemma \ref{normalTwistSES}, then we obtain the short exact sequence 
\[
0 \to [\mathcal{N}_{X/\mathbb{P}(\Sym^d V)} \otimes \mathcal{O}_{\mathbb{P}(V)}(-d)]^* \to \Sym^d V \otimes \mathcal{O}_{\mathbb{P}(V)} \xrightarrow[]{\delta^{d-1}_d} V \otimes \mathcal{O}_{\mathbb{P}(V)}(d-1) \to 0, 
\]
where the quotient map is $\delta^{d-1}_d$ by Proposition \ref{dualCompositions}. This means that
\[
[\mathcal{N}_{X/\mathbb{P}(\Sym^d V)} \otimes \mathcal{O}_{\mathbb{P}(V)}(-d)]^* \cong K^{d-1}_d.
\]
Hence, $[\mathcal{N}_{X/\mathbb{P}(\Sym^d V)} \otimes \mathcal{O}_{\mathbb{P}(V)}(-d)]^*$ is Gieseker semistable by Lemma \ref{usefulExample} and thus slope semistable by Proposition \ref{giesekerSlopeRelationship}. Finally, this implies that $\mathcal{N}_{X/\mathbb{P}(\Sym^d V)}$ is slope semistable by Lemma \ref{slopeSemistablePreservedTensor} and Lemma \ref{slopeSemistableDualPreserved}.

\section{Restrictions of 2-Veronese normal bundles to lines and rational normal curves}\label{restrictionSection}

By the Grothendieck-Birkhoff theorem, we know that a Veronese normal bundle restricted to a line will be isomorphic to a direct sum decomposition of line bundles. The slope semistability of Veronese normal bundles imposes certain restrictions on this direct sum decomposition for general lines. 

\begin{theorem} \cite[Theorem 3.0.1, Page 57]{HuybrechtsLehn}
Let $E$ be a slope semistable locally free sheaf of rank $r$ on $\mathbb{P}^n$. If $L$ is a general line in $\mathbb{P}^n$ and $E|_L \cong \mathcal{O}_L(b_1) \oplus \cdots \oplus \mathcal{O}_L(b_r)$ such that $b_1 \geq b_2 \geq \cdots \geq b_r$, then 
\[
0 \leq b_i - b_{i+1} \leq 1
\]
for all $i = 1, \cdots, r-1$. 
\end{theorem}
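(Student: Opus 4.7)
The plan is the classical Grauert--M\"ulich argument, carried out on the flag incidence variety of lines and points. Let $B = \Gr(2, n+1)$ parametrize lines in $\mathbb{P}^n$, let $\mathcal{F} \subset B \times \mathbb{P}^n$ denote the incidence variety $\{(L,x) : x \in L\}$, and denote the two projections by $\pi \colon \mathcal{F} \to B$ (a $\mathbb{P}^1$-bundle whose fiber $L_b$ over $b$ is the corresponding line) and $q \colon \mathcal{F} \to \mathbb{P}^n$ (a $\mathbb{P}^{n-1}$-bundle isomorphic to $\mathbb{P}(T_{\mathbb{P}^n})$). Set $\tilde E = q^* E$. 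By semicontinuity of splitting type on $\mathbb{P}^1$-bundles, there is a dense open $U \subset B$ on which $\tilde E|_{L_b}$ has constant splitting type, equal to the generic splitting type $(b_1 \geq \cdots \geq b_r)$ of the theorem. Over $\pi^{-1}(U)$ one therefore has the relative Harder--Narasimhan filtration
\[
0 = F_0 \subset F_1 \subset \cdots \subset F_s = \tilde E|_{\pi^{-1}(U)},
\]
with $(F_j/F_{j-1})|_{L_b} \cong \mathcal{O}_{L_b}(\mu_j)^{\oplus r_j}$ and $\mu_1 > \cdots > \mu_s$ the distinct values among the $b_i$. Proving the theorem is equivalent to showing $\mu_j - \mu_{j+1} \leq 1$ for every $j$, so I argue by contradiction and assume $\mu_j - \mu_{j+1} \geq 2$ for some $j$.

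The heart of the proof is a descent step. The bundle $\tilde E = q^* E$ carries the tautological flat partial connection in the $q$-fiber directions, and the inclusion $F_j \hookrightarrow \tilde E$ induces an $\mathcal{O}_{\mathcal{F}}$-linear second fundamental form
\[
\sigma_j \colon F_j \longrightarrow (\tilde E / F_j) \otimes \Omega_{\mathcal{F}/\mathbb{P}^n}^1,
\]
whose vanishing is equivalent to $F_j$ being pulled back along $q$ from a subsheaf of $E$. Using $\mathcal{F} = \mathbb{P}(T_{\mathbb{P}^n})$ and the fact that on a line $L \subset \mathbb{P}^n$ one has $T_L \cong \mathcal{O}_L(2)$ and $N_{L/\mathbb{P}^n} \cong \mathcal{O}_L(1)^{\oplus (n-1)}$, a direct relative-Euler computation gives
\[
\Omega_{\mathcal{F}/\mathbb{P}^n}^1 |_{L_b} \cong \mathcal{O}_{L_b}(1)^{\oplus (n-1)}.
\]
Restricting $\sigma_j$ to a fiber $L_b$, the source splits into summands of degree $\geq \mu_j$ and the target into summands of degree $\leq \mu_{j+1} + 1$. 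Under the assumption $\mu_j - \mu_{j+1} \geq 2$, every $\mathcal{O}_{\mathbb{P}^1}$-linear map between such summands must vanish. Hence $\sigma_j|_{L_b} = 0$ for every $b \in U$, so $\sigma_j = 0$ on $\pi^{-1}(U)$, and $F_j$ descends to a saturated subsheaf $G \subset E$ defined on an open of $\mathbb{P}^n$ with complement of codimension $\geq 2$, which extends reflexively to all of $\mathbb{P}^n$.

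A slope comparison now produces the contradiction. The slopes $\mu(G)$ and $\mu(E)$ are weighted averages of the $\mu_i$ taken over the index ranges $1 \leq i \leq j$ and $1 \leq i \leq s$ respectively, with positive weights $r_i$; since $\mu_1 > \cdots > \mu_j > \mu_{j+1} > \cdots > \mu_s$, the average over the first $j$ indices strictly exceeds the average over all $s$, so $\mu(G) > \mu(E)$, contradicting slope semistability of $E$. The main obstacle is the vanishing of $\sigma_j$; the bound $b_i - b_{i+1} \leq 1$ is sharp precisely because $\Omega_{\mathcal{F}/\mathbb{P}^n}^1 |_{L_b}$ has summands of degree $1$, so a fiberwise gap of $1$ is compatible with a nonzero second fundamental form (and indeed occurs generically), while a gap of $\geq 2$ is not.
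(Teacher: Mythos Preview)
The paper does not give its own proof of this statement: it is quoted verbatim as the Grauert--M\"ulich theorem with a citation to \cite[Theorem~3.0.1]{HuybrechtsLehn}, and is then used as a black box to motivate the restriction computations in Section~\ref{restrictionSection}. So there is nothing in the paper to compare your argument against.

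That said, your proposal is correct and is precisely the classical Grauert--M\"ulich proof as presented in the cited reference (and in Okonek--Schneider--Spindler). The key ingredients---the flag incidence variety, the relative Harder--Narasimhan filtration over the open locus of constant splitting type, the second fundamental form for $F_j\subset q^*E$ relative to $q$, the identification $\Omega^1_{\mathcal F/\mathbb P^n}\big|_{L_b}\cong \mathcal O_{L_b}(1)^{\oplus(n-1)}$, and the resulting fiberwise vanishing when the gap is $\geq 2$---are exactly the steps in Huybrechts--Lehn. Your slope comparison at the end is also the standard one (the slope of the descended subsheaf $G$ is computed on a general line, where it equals the weighted average of the top $\mu_i$). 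One small point worth tightening: the descent of $F_j$ along $q$ and its extension across the codimension-$\geq 2$ locus deserve a sentence of justification (e.g.\ via reflexive extension of a saturated subsheaf, as in \cite[Lemma~3.0.2]{HuybrechtsLehn}), but you gesture at this and it is routine.
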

This is known as the Grauert-Mulich theorem, in its original form. A more general statement can be found in \cite[Theorem 3.1.2, Page 59]{HuybrechtsLehn}. A standard calculation shows that, if $\xi$ is the hyperplane class in the Chow ring $CH(\mathbb{P}^n)$, then the Chern class of the normal bundle for a degree $d$ Veronese of dimension $n$ is 
\[
c(\mathcal{N}_{X/\mathbb{P}(\Sym^d V)}) = \frac{(1+d \xi)^{\binom{n+d}{d}}}{(1+\xi)^{n+1}}.
\]
In particular, the degree of the first Chern class of the Veronese normal bundle restricted to a line is $\binom{n+d}{d}d - (n+1)$. Combining this Chern class information with the Grauert-Mulich theorem tells us that for a general line $L \subset \mathbb{P}(V)$, if
\[
\mathcal{N}_{X/\mathbb{P}(\Sym^d V)}|_L \cong \mathcal{O}_L(b_1) \oplus \cdots \oplus \mathcal{O}_L(b_{\binom{n+d}{d} - n - 1})
\]
such that $b_1 \geq b_2 \geq \cdots \geq b_{\binom{n+d}{d} - n - 1}$, then $0 \leq b_i - b_{i+1} \leq 1$ for $1 \leq i \leq \binom{n+d}{d} - n-2$ and $\sum_{i=1}^{\binom{n+d}{d} - n- 1} b_i = \binom{n+d}{d}d - n - 1 $. In the case of degree 2 Veroneses, we can pin down this decomposition precisely. 

\begin{theorem}\label{VeroneseNormalRestrictLine}
Let $v_{n,2}: \mathbb{P}(V) \to \mathbb{P}(\Sym^2 V)$ be a degree 2 Veronese embedding, where $V$ is a $k$-vector space of dimension $n+1$. Let $X$ denote the Veronese variety and $L \subset \mathbb{P}(V)$ a line. Then 
\[
\mathcal{N}_{X/\mathbb{P}(\Sym^2 V)}|_L \cong \mathcal{O}_L(2)^{\oplus [\frac{n(n-1)}{2} ] } \oplus \mathcal{O}_L(3)^{\oplus (n-1)} \oplus \mathcal{O}_L(4).
\]
\end{theorem}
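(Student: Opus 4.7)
The plan is to work directly with the presentation of the normal bundle from Lemma~\ref{normalTwistSES}. For $d=2$, twisting that sequence by $\mathcal{O}_{\mathbb{P}(V)}(2)$ gives
\[
0 \to V \otimes \mathcal{O}_{\mathbb{P}(V)}(1) \xrightarrow{\Theta'} \Sym^2 V \otimes \mathcal{O}_{\mathbb{P}(V)}(2) \to \mathcal{N}_{X/\mathbb{P}(\Sym^2 V)} \to 0,
\]
where $\Theta'$ is multiplication by the Euler section $E = \sum_{i=0}^n Z_i\, \frac{\partial}{\partial X_i}$. I will choose a basis $X_0, \dots, X_n$ of $V$ so that $L = \mathbb{P}(W)$ with $W = \operatorname{span}(X_0, X_1)$, and set $W' = \operatorname{span}(X_2, \dots, X_n)$, so $V = W \oplus W'$. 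Because $Z_j|_L = 0$ for $j \geq 2$, the restriction $E|_L = Z_0\, \frac{\partial}{\partial X_0} + Z_1\, \frac{\partial}{\partial X_1}$ already lies in $W \otimes H^0(L, \mathcal{O}_L(1))$.

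The crux is to exploit the induced decomposition
\[
\Sym^2 V = \Sym^2 W \,\oplus\, (W \otimes W') \,\oplus\, \Sym^2 W'.
\]
Since every partial appearing in $E|_L$ lies in $W$, multiplication by $E|_L$ carries $W \otimes \mathcal{O}_L(1)$ into $\Sym^2 W \otimes \mathcal{O}_L(2)$ and $W' \otimes \mathcal{O}_L(1)$ into $(W \otimes W') \otimes \mathcal{O}_L(2)$, while the summand $\Sym^2 W' \otimes \mathcal{O}_L(2)$ lies entirely outside the image of $\Theta'|_L$. The restricted short exact sequence therefore splits as a direct sum of three short exact sequences, and writing their cokernels as $Q_1, Q_2, Q_3$ one obtains $\mathcal{N}_{X/\mathbb{P}(\Sym^2 V)}|_L = Q_1 \oplus Q_2 \oplus Q_3$.

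It then remains to identify each $Q_i$. The summand $Q_3 = \Sym^2 W' \otimes \mathcal{O}_L(2)$ is trivially $\mathcal{O}_L(2)^{\oplus n(n-1)/2}$, since $\dim W' = n-1$. For $Q_2$, the map splits further over a basis of $W'$: for each $X_j$ with $j \geq 2$, the corresponding piece is $\mathcal{O}_L(1) \to \mathcal{O}_L(2)^{\oplus 2}$ given by $g \mapsto (Z_0 g, Z_1 g)$, a twist of the Euler sequence on $L \cong \mathbb{P}^1$ with cokernel $\mathcal{O}_L(3)$, giving $Q_2 = \mathcal{O}_L(3)^{\oplus (n-1)}$. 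Finally $Q_1$ is the cokernel of $W \otimes \mathcal{O}_L(1) \to \Sym^2 W \otimes \mathcal{O}_L(2)$, which is exactly the presentation of Lemma~\ref{normalTwistSES} applied to the degree-$2$ Veronese of $L \cong \mathbb{P}(W)$ inside $\mathbb{P}(\Sym^2 W) \cong \mathbb{P}^2$; this is the normal bundle of a smooth plane conic, so $Q_1 = \mathcal{O}_L(4)$.

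The only point requiring real care is the claim that $\Theta'|_L$ respects the decomposition of $\Sym^2 V$ into the three summands above. This rests entirely on $E|_L$ lying in $W \otimes H^0(L, \mathcal{O}_L(1))$ rather than in the larger $V \otimes H^0(L, \mathcal{O}_L(1))$, a consequence of the vanishing of $Z_j|_L$ for $j \geq 2$. Once that is in place, the remaining computations are standard twists of the Euler sequence on $\mathbb{P}^1$ and the known form of the normal bundle of a plane conic.
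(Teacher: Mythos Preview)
Your proof is correct, but it proceeds by a genuinely different route from the paper's. The paper first observes that the map $\Theta'$ of Lemma~\ref{normalTwistSES} coincides with the map obtained by symmetrizing the Euler sequence to degree~$2$, and hence that $\mathcal{N}_{X/\mathbb{P}(\Sym^2 V)} \cong \Sym^2 T_{\mathbb{P}(V)}$ \emph{globally}. It then restricts this isomorphism to $L$, computes $T_{\mathbb{P}(V)}|_L \cong \mathcal{O}_L(2) \oplus \mathcal{O}_L(1)^{\oplus(n-1)}$ from the restricted Euler sequence, and takes $\Sym^2$. You instead restrict the presentation first, choose coordinates adapted to $L$ so that the Euler section collapses to $Z_0\,\partial_{X_0}+Z_1\,\partial_{X_1}$, and block-diagonalize $\Theta'|_L$ along the splitting $\Sym^2 V = \Sym^2 W \oplus (W\otimes W') \oplus \Sym^2 W'$, reading off the three cokernels directly.

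The two arguments are secretly the same computation: your summands $Q_1, Q_2, Q_3$ are precisely $\Sym^2 T_L$, $T_L \otimes \mathcal{N}_{L/\mathbb{P}(V)}$, and $\Sym^2 \mathcal{N}_{L/\mathbb{P}(V)}$ under the paper's identification, via the splitting $T_{\mathbb{P}(V)}|_L \cong T_L \oplus \mathcal{N}_{L/\mathbb{P}(V)}$. The paper's global isomorphism $\mathcal{N}_{X/\mathbb{P}(\Sym^2 V)} \cong \Sym^2 T_{\mathbb{P}(V)}$ is the cleaner packaging and is reused verbatim for the rational normal curve case (Theorem~\ref{restrictionTheoremRNC}); your coordinate approach is more hands-on, avoids naming that isomorphism, and would adapt more readily to restrictions along higher-dimensional linear subspaces, where one would get a filtration of $\mathcal{N}|_{\Lambda}$ by pieces of the form $\Sym^a T_\Lambda \otimes \Sym^b \mathcal{N}_{\Lambda/\mathbb{P}(V)}$.
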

\begin{proof}
Symmetrizing the Euler exact sequence 
\[
0 \to \mathcal{O} \to V \otimes \mathcal{O}(1) \xrightarrow[]{q} T_{\mathbb{P}(V)} \to 0 
\]
to the second degree with respect to $q$ yields the short exact sequence 
\[
0 \to V \otimes \mathcal{O}_{\mathbb{P}(V)}(1) \xrightarrow[]{\iota} \Sym^2 V \otimes \mathcal{O}_{\mathbb{P}(V)}(2) \to \Sym^2 T_{\mathbb{P}(V)} \to 0.
\]
By Lemma \ref{normalTwistSES}, we also have the short exact sequence 
\[
0 \to V \otimes \mathcal{O}_{\mathbb{P}(V)}(1) \xrightarrow[]{\Theta'} \Sym^2 V \otimes \mathcal{O}_{\mathbb{P}(V)}(2) \to \mathcal{N}_{X/\mathbb{P}(\Sym^2 V)} \to 0.
\]
Fix our embedding $v_{n,2}$ to be the second power embedding. Using the same notation used in Lemma \ref{normalTwistSES}, note that the map $\iota$ is given by tensoring by $\sum_{i=0}^n Z_i \frac{\partial}{\partial X_i}$, and note that the map $\Theta'$ is also given by tensoring by $\sum_{i=0}^n Z_i \frac{\partial}{\partial X_i}$ by Lemma \ref{normalTwistSES}.
This implies that $\Sym^2 T_{\mathbb{P}(V)}$ and $\mathcal{N}_{X/\mathbb{P}(\Sym^2 V)}$ are isomorphic. Then 
\[
\mathcal{N}_{X/\mathbb{P}(\Sym^2 V)}|_{L} \cong \Sym^2 T_{\mathbb{P}(V)}|_{L} \cong \Sym^2(T_{\mathbb{P}(V)}|_L).
\]
Thus, it suffices to identify the decomposition of $T_{\mathbb{P}(V)}|_{L}$ into line bundles. Restricting the Euler exact sequence on $\mathbb{P}(V)$ to the line $L$ yields the short exact sequence
\[
0 \to \mathcal{O}_L \to V \otimes \mathcal{O}_L(1) \to T_{\mathbb{P}(V)}|_L \to 0.
\]
Tensoring by $\mathcal{O}_{L}(-1)$ and then dualizing yields the short exact sequence
\[
0 \to K \xrightarrow[]{\alpha} V \otimes \mathcal{O}_L \to \mathcal{O}_L(1) \to 0,
\]
where $K \cong (T_{\mathbb{P}(V)}|_L \otimes \mathcal{O}_L(-1))^*$. Furthermore, we know that $K$ is a rank $n$ vector bundle isomorphic to $\bigoplus_{i=1}^n \mathcal{O}_L(a_i)$ where $\sum a_i = -1$. However, since $\alpha$ is a nontrivial injective map, we must have $K \cong \mathcal{O}_L(-1) \oplus \bigoplus_{i=1}^{n-1} \mathcal{O}_L$. This implies that $T_{\mathbb{P}(V)}|_L \cong \mathcal{O}_L(1)^{\oplus (n-1)} \oplus \mathcal{O}_L(2)$, and thus
\[
\mathcal{N}_{X/\mathbb{P}(\Sym^2 V)}|_L \cong \mathcal{O}_L(2)^{\oplus \frac{(n-1)(n-2)}{2} + (n-1) } \oplus \mathcal{O}_L(3)^{\oplus (n-1)} \oplus \mathcal{O}_L(4). 
\]
\end{proof}

Analogously, we can determine the restriction of degree 2 Veronese normal bundles to rational normal curves in $\mathbb{P}(V)$.

\begin{theorem}
    Let $v_{n,2}: \mathbb{P}(V) \to \mathbb{P}(\Sym^2 V)$ be a degree 2 Veronese embedding, where $V$ is a $k$-vector space of dimension $n+1$. Let $X$ denote the Veronese variety and let $R$ denote a rational normal curve of degree $n$ in $\mathbb{P}(V)$. Then 
    \[
    \mathcal{N}_{X/\mathbb{P}(\Sym^2 V)}|_R \cong \bigoplus_{i=1}^{\frac{n(n+1)}{2}} \mathcal{O}_{\mathbb{P}^1}(2n+2). 
    \]
\end{theorem}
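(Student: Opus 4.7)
The plan is to mimic the strategy of Theorem~\ref{VeroneseNormalRestrictLine}: reuse the isomorphism $\mathcal{N}_{X/\mathbb{P}(\Sym^2 V)} \cong \Sym^2 T_{\mathbb{P}(V)}$ on $\mathbb{P}(V)$ that is established (prior to any restriction) inside the proof of that theorem, restrict to $R$, and then identify the splitting type of $T_{\mathbb{P}(V)}|_R$. I expect the restriction to be balanced, namely $T_{\mathbb{P}(V)}|_R \cong \mathcal{O}_{\mathbb{P}^1}(n+1)^{\oplus n}$. Granting this, the conclusion is immediate since
\[
\mathcal{N}_{X/\mathbb{P}(\Sym^2 V)}|_R \cong \Sym^2\bigl(\mathcal{O}_{\mathbb{P}^1}(n+1)^{\oplus n}\bigr) \cong \mathcal{O}_{\mathbb{P}^1}(2n+2)^{\oplus n(n+1)/2}.
\]

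To identify $T_{\mathbb{P}(V)}|_R$, I would pull back the Euler sequence along $R \hookrightarrow \mathbb{P}(V)$. Since $R$ has degree $n$, $\mathcal{O}_{\mathbb{P}(V)}(1)|_R \cong \mathcal{O}_{\mathbb{P}^1}(n)$, and the restricted sequence reads
\[
0 \to \mathcal{O}_{\mathbb{P}^1} \to V \otimes \mathcal{O}_{\mathbb{P}^1}(n) \to T_{\mathbb{P}(V)}|_R \to 0.
\]
The last bundle has rank $n$ and degree $n(n+1)$, so the desired claim is equivalent to the twist $T_{\mathbb{P}(V)}|_R \otimes \mathcal{O}_{\mathbb{P}^1}(-n-2)$ having vanishing $H^0$ and $H^1$: any rank-$n$ bundle of degree $-n$ on $\mathbb{P}^1$ satisfying $H^0 = H^1 = 0$ is forced to be $\mathcal{O}_{\mathbb{P}^1}(-1)^{\oplus n}$. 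I would therefore twist the above sequence by $\mathcal{O}_{\mathbb{P}^1}(-n-2)$ and pass to the long exact sequence in cohomology. Both $H^0(\mathcal{O}_{\mathbb{P}^1}(-n-2))$ and $H^0(V \otimes \mathcal{O}_{\mathbb{P}^1}(-2))$ vanish, so the sought vanishing for $T_{\mathbb{P}(V)}|_R(-n-2)$ reduces to the connecting map
\[
H^1(\mathcal{O}_{\mathbb{P}^1}(-n-2)) \to H^1(V \otimes \mathcal{O}_{\mathbb{P}^1}(-2))
\]
being an isomorphism, with both source and target of dimension $n+1$.

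The hard part will be verifying that this connecting map is an isomorphism. By functoriality of Serre duality on $\mathbb{P}^1$ (with $\omega_{\mathbb{P}^1} = \mathcal{O}(-2)$), it is the transpose of the $H^0$-map associated to the Serre-dual sheaf morphism $V^\vee \otimes \mathcal{O}_{\mathbb{P}^1} \to \mathcal{O}_{\mathbb{P}^1}(n)$. A bit of bookkeeping identifies this latter morphism as the pullback to $R$ of the dualized Euler surjection $V^\vee \otimes \mathcal{O}_{\mathbb{P}(V)} \to \mathcal{O}_{\mathbb{P}(V)}(1)$, so on global sections it is exactly the restriction-of-linear-forms map $V^\vee = H^0(\mathbb{P}(V), \mathcal{O}(1)) \to H^0(R, \mathcal{O}_R(1)) = H^0(\mathbb{P}^1, \mathcal{O}_{\mathbb{P}^1}(n))$. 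Since $R$ is embedded by the complete linear series $|\mathcal{O}_{\mathbb{P}^1}(n)|$, this restriction is an isomorphism of $(n+1)$-dimensional vector spaces, and the argument concludes once the Serre duality bookkeeping is carried out carefully.
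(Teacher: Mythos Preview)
Your proposal is correct and matches the paper up to the point of identifying the splitting type of $T_{\mathbb{P}(V)}|_R$; from there the two proofs diverge. The paper identifies the restricted Euler sequence $0 \to \mathcal{O}_{\mathbb{P}^1} \to V \otimes \mathcal{O}_{\mathbb{P}^1}(n) \to T_{\mathbb{P}(V)}|_R \to 0$ with an auxiliary sequence built on $\mathbb{P}^1$ itself: it symmetrizes the tautological sequence $0 \to \mathcal{O}(-1) \to H^0(\mathcal{O}(1)) \otimes \mathcal{O} \to \mathcal{O}(1) \to 0$ to degree $n$, twists and dualizes to obtain $0 \to \mathcal{O}_{\mathbb{P}^1} \to \Sym^n H^0(\mathcal{O}(1)) \otimes \mathcal{O}(n) \to \Sym^{n-1} H^0(\mathcal{O}(1)) \otimes \mathcal{O}(n+1) \to 0$, and then checks in coordinates that the two left-hand injections agree, so that $T_{\mathbb{P}(V)}|_R \cong \Sym^{n-1} H^0(\mathcal{O}_{\mathbb{P}^1}(1)) \otimes \mathcal{O}_{\mathbb{P}^1}(n+1)$. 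Your route is a cohomological shortcut: twisting by $\mathcal{O}(-n-2)$ and invoking Serre duality reduces the balancedness to the single fact that the restriction map $H^0(\mathbb{P}(V),\mathcal{O}(1)) \to H^0(R,\mathcal{O}_R(1))$ is an isomorphism, which is precisely the statement that $R$ is embedded by the complete linear series. Your argument isolates exactly the property of $R$ that is being used and avoids any coordinate computation; the paper's approach, on the other hand, yields the bonus of a natural $\mathrm{SL}_2$-equivariant identification of $T_{\mathbb{P}(V)}|_R$ with $\Sym^{n-1} H^0(\mathcal{O}_{\mathbb{P}^1}(1)) \otimes \mathcal{O}_{\mathbb{P}^1}(n+1)$.
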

\begin{proof}
As discussed in the proof of Theorem \ref{VeroneseNormalRestrictLine}, there is an isomorphism between $\Sym^2 T_{\mathbb{P}(V)}$ and $\mathcal{N}_{X/\mathbb{P}(\Sym^2 V)}$. Since 
\[
\mathcal{N}_{X/\mathbb{P}(\Sym^2 V)}|_R \cong \Sym^2 T_{\mathbb{P}(V)}|_R \cong \Sym^2(T_{\mathbb{P}(V)}|_R),
\]
it suffices to identify the decomposition of $T_{\mathbb{P}(V)}|_R$ into line bundles. Restricting the Euler exact sequence on $\mathbb{P}(V)$ to $R$ and identifying $R$ with the projective line, we obtain the short exact sequence
\begin{equation}\label{EulerPVRestrictRThenP1}
   0 \to \mathcal{O}_{\mathbb{P}^1} \xrightarrow{f} V \otimes \mathcal{O}_{\mathbb{P}^1}(n) \to T_{\mathbb{P}(V)}|_R \to 0. 
\end{equation}
We claim that $T_{\mathbb{P}(V)}|_R \cong \bigoplus_{i=1}^n \mathcal{O}_{\mathbb{P}^1}(n+1)$. 
\par 
To see this, first note that a global section $F \in H^0(\mathbb{P}^1, \mathcal{O}_{\mathbb{P}^1}(1))$ is equivalent to a map $\mathcal{O}_{\mathbb{P}^1} \to \mathcal{O}_{\mathbb{P}^1}(1)$. Then we obtain a surjective map 
\[
H^0(\mathbb{P}^1, \mathcal{O}_{\mathbb{P}^1}(1)) \otimes \mathcal{O}_{\mathbb{P}^1} \xrightarrow[]{\beta} \mathcal{O}_{\mathbb{P}^1}(1)
\]
which leads to the short exact sequence
\[
0 \to \mathcal{O}_{\mathbb{P}^1}(-1) \to H^0(\mathbb{P}^1, \mathcal{O}_{\mathbb{P}^1}(1)) \otimes \mathcal{O}_{\mathbb{P}^1} \xrightarrow[]{\beta} \mathcal{O}_{\mathbb{P}^1}(1) \to 0.
\]
Symmetrizing this short exact sequence to the $n$-th degree with respect to $\beta$ yields the short exact sequence
\[
0 \to \Sym^{n-1} H^0(\mathbb{P}^1, \mathcal{O}_{\mathbb{P}^1}(1)) \otimes \mathcal{O}_{\mathbb{P}^1}(-1) \to \Sym^n H^0(\mathbb{P}^1, \mathcal{O}_{\mathbb{P}^1}(1)) \otimes \mathcal{O}_{\mathbb{P}^1} \xrightarrow[]{\beta'} \mathcal{O}_{\mathbb{P}^1}(n) \to 0.
\]
Then tensoring by $\mathcal{O}_{\mathbb{P}^1}(-n)$ and dualizing yields 
\begin{equation}\label{evaluationSESonRNC}
    0 \to \mathcal{O}_{\mathbb{P}^1} \xrightarrow[]{g} \Sym^n H^0(\mathbb{P}^1, \mathcal{O}_{\mathbb{P}^1}(1)) \otimes \mathcal{O}_{\mathbb{P}^1}(n) \to \Sym^{n-1} H^0(\mathbb{P}^1, \mathcal{O}_{\mathbb{P}^1}(1)) \otimes \mathcal{O}_{\mathbb{P}^1}(n+1) \to 0.
\end{equation}
We claim that short exact sequence \ref{evaluationSESonRNC} is isomorphic to short exact sequence \ref{EulerPVRestrictRThenP1}. First, since rational normal curves of degree $n$ in $\mathbb{P}(V)$ are all projectively equivalent, we can fix the embedding of $R$. Let $W = k\langle Y_0, Y_1 \rangle$ be a two-dimensional vector space. Let $Z_0$ and $Z_1$ be coordinates on $W$ dual to $Y_0$ and $Y_1$, respectively. Let $\{ X_i \}_{i=0}^n$ be a basis of $V$ and choose the coordinate system on $V$ to be dual to this basis. Then fix the rational normal curve embedding $\mathbb{P}(W) \to \mathbb{P}(V)$ to be the map described on homogeneous coordinates as 
\[
[Z_0:Z_1] \mapsto [Z_0^n: Z_0^{n-1}Z_1 : \cdots : Z_1^n].
\] 
Then the map $f: \mathcal{O}_{\mathbb{P}^1} \to V \otimes \mathcal{O}_{\mathbb{P}^1}(n)$ is given by 
the global section 
\[
Z_0^n \frac{\partial}{\partial X_0} + Z_0^{n-1}Z_1  \frac{\partial}{\partial X_1}+ \cdots + Z_1^n \frac{\partial}{\partial X_n}. 
\]
\par
On the other hand, identifying $H^0(\mathbb{P}^1, \mathcal{O}_{\mathbb{P}^1}(1))$ with $k \langle Z_0, Z_1 \rangle$, the map 
\[
\beta: H^0(\mathbb{P}^1, \mathcal{O}_{\mathbb{P}^1}(1)) \otimes \mathcal{O}_{\mathbb{P}^1} \to \mathcal{O}_{\mathbb{P}^1}(1)
\]
is given by $1 \cdot Z_0 + 0 \cdot Z_1 \mapsto Z_0$ and $0 \cdot Z_0 + 1 \cdot Z_1 \mapsto Z_1$.  Then the map 
\[
\beta': \Sym^n H^0(\mathbb{P}^1, \mathcal{O}_{\mathbb{P}^1}(1)) \otimes \mathcal{O}_{\mathbb{P}^1} \to \mathcal{O}_{\mathbb{P}^1}(n)
\]
is the expected map where the induced basis elements of $\Sym^n H^0(\mathbb{P}^1, \mathcal{O}_{\mathbb{P}^1}(1))$ are sent to the elements $Z_0^n, Z_0^{n-1}Z_1, \cdots, \text{ and } Z_1^n$. Tensoring the map $\beta'$ by $\mathcal{O}_{\mathbb{P}^1}(-n)$ and dualizing, we see that $g$ and $f$ are exactly the same map. This implies that short exact sequences \ref{EulerPVRestrictRThenP1} and \ref{evaluationSESonRNC} are isomorphic, and thus
\[
T_{\mathbb{P}(V)}|_R \cong \Sym^{n-1} H^0(\mathbb{P}^1, \mathcal{O}_{\mathbb{P}^1}(1)) \otimes \mathcal{O}_{\mathbb{P}^1}(n+1) \cong \bigoplus_{i=1}^n \mathcal{O}_{\mathbb{P}^1}(n+1).  
\]
Hence,
\[
\mathcal{N}_{X/\mathbb{P}(\Sym^2 V)}|_R \cong \Sym^2(T_{\mathbb{P}(V)}|_R) \cong \bigoplus_{i=1}^{\frac{n(n+1)}{2}} \mathcal{O}_{\mathbb{P}^1}(2n+2). 
\]

\end{proof}

\section{Acknowledgements}

The author thanks Anand Patel for introducing them to the questions that led to this paper, and thanks Anand Patel and Joe Harris for their invaluable mentorship, instruction, and encouragement during the research process. Without them, this project would not have been possible. The author is grateful to Mihnea Popa, Izzet Coskun, and Aaron Landesman for helpful conversations and correspondences. The author was supported by Harvard's PRISE fellowship for a significant portion of their research time.

\bibliographystyle{alpha}
 \bibliography{bibliography}

\begin{thebibliography}{EVdV81}

\bibitem[ALY19]{atansovLarsonYang}
Atanas Atanasov, Eric Larson, and David Yang.
\newblock Interpolation for normal bundles of general curves.
\newblock {\em Memoirs of the American Mathematical Society}, 257, 2019.

\bibitem[BP84]{ballicoEllia}
E.~Ballico and Ellia P.
\newblock Some more examples of curves in $\mathbb{P}^3$ with stable normal bundle.
\newblock {\em Journal für die reine und angewandte Mathematik}, 350:87--93, 1984.

\bibitem[CLV22]{coskunlarsonvogt}
Izzet Coskun, Eric Larson, and Isabel Vogt.
\newblock Stability of normal bundles of space curves.
\newblock {\em Algebra and Number Theory}, 16:919--953, 2022.

\bibitem[ED80]{ellingsrudlaksov}
G.~Ellingsrud and Laksov D.
\newblock The normal bundle of elliptic space curves of degree 5.
\newblock {\em 18th Scandinavian Congress of Mathematicians Proceedings}, pages 258--287, 1980.

\bibitem[EL92]{einlazarsfeld}
Lawrence Ein and Robert Lazarsfeld.
\newblock Stability and restrictions of picard bundles, with an application to the normal bundles of elliptic curves.
\newblock {\em London Mathematical Society Lecture Note Series}, pages 149--156, 07 1992.

\bibitem[EVdV81]{eisenbudven}
D.~Eisenbud and A.~Van~de Ven.
\newblock On the normal bundle of smooth rational spaces curves.
\newblock {\em Mathematische Annalen}, 256:453--463, 1981.

\bibitem[GG80]{ghioneSacchiero}
F.~Ghione and Sacchiero G.
\newblock Normal bundles of rational curves in $\mathbb{P}^3$.
\newblock {\em Manuscripta Math}, 33:111--128, 1980.

\bibitem[Gro57]{birkhoffGrothendieck}
Alexander Grothendieck.
\newblock Sur la classification des fibrés holomorphes sur la sphère de riemann.
\newblock {\em American Journal of Mathematics}, 1957.

\bibitem[HL10]{HuybrechtsLehn}
Daniel Huybrechts and Manfred Lehn.
\newblock {\em The Geometry of Moduli Spaces of Sheaves}.
\newblock Cambridge University Press, 2010.

\bibitem[Ish18]{shihoko}
Shihoko Ishii.
\newblock {\em Introduction to Singularities}.
\newblock Springer, 2018.

\bibitem[KMR16]{KleppeMiróRoig}
Jan~O. Kleppe and Rosa~M. Miró-Roig.
\newblock On the normal sheaf of determinantal varieties.
\newblock {\em Journal für die reine und angewandte Mathematik (Crelles Journal)}, 2016(719):173--209, 2016.

\bibitem[LP16]{landesmanDelPezzo}
Aaron Landesman and Anand Patel.
\newblock Interpolation problems: Del pezzo surfaces.
\newblock {\em Annali Scuola Normale Superiore - Classe di Scienze}, 19, 2016.

\bibitem[LV23]{larsonVogt}
Eric Larson and Isabel Vogt.
\newblock Interpolation for brill-noether curves.
\newblock {\em Forum of Mathematics}, 2023.

\bibitem[New83]{newstead}
P.E. Newstead.
\newblock A space curve whose normal bundle is stable.
\newblock {\em Journal of London Mathematical Society}, 28:428--434, 1983.

\bibitem[Ran07]{ran}
Ziv Ran.
\newblock Normal bundles of rational curves in projective spaces.
\newblock {\em Asian Journal of Mathematics,}, 11(4):567--608, 2007.

\bibitem[Sha24]{shang}
Ray Shang.
\newblock Interpolation for degree 2 veroneses of odd dimension.
\newblock 2024.

\end{thebibliography}

\end{document}